\documentclass[11pt,reqno]{amsart}
\usepackage[T1]{fontenc}
\usepackage{lmodern}
\usepackage{amsmath,amssymb,amsthm,mathtools,mathrsfs}
\usepackage{microtype}
\usepackage{enumitem}
\usepackage[colorlinks=true,linkcolor=black,citecolor=black,urlcolor=blue]{hyperref}
\usepackage[margin=1.09in]{geometry}
\allowdisplaybreaks[2]
\setlist[enumerate]{leftmargin=*,itemsep=2pt,topsep=3pt}
\newtheorem{theorem}{Theorem}[section]
\newtheorem{proposition}[theorem]{Proposition}
\newtheorem{corollary}[theorem]{Corollary}
\newtheorem{lemma}[theorem]{Lemma}
\theoremstyle{definition}
\newtheorem{definition}[theorem]{Definition}
\newtheorem{example}[theorem]{Example}
\theoremstyle{remark}
\newtheorem{remark}[theorem]{Remark}
\newcommand{\NN}{\mathbb{N}}
\newcommand{\CC}{\mathcal{C}}
\newcommand{\PP}{\mathcal{P}}

\newcommand{\Ann}{\operatorname{Ann}}
\newcommand{\Frac}{\operatorname{Frac}}
\newcommand{\inv}{\mathrm{inv}}

\title[Radical well-mixed difference ideals]{The ascending chain condition for radical well-mixed difference ideals}
\author{Wei Li}
\address{State Key Laboratory of Mathematical Sciences, Academy of Mathematics and Systems Science, Chinese Academy of Sciences, Beijing, China}
\email{liwei@mmrc.iss.ac.cn}
\author{Jie Wang}
\address{State Key Laboratory of Mathematical Sciences, Academy of Mathematics and Systems Science, Chinese Academy of Sciences, Beijing, China}
\email{wangjie212@amss.ac.cn}
\date{October 21, 2026}
\subjclass[2020]{12H10, 13E05}
\keywords{Difference polynomial ring, radical well-mixed ideal, finite basis, ascending chain condition, inversive closure, flat going-down}
\begin{document}
\begin{abstract}
We establish a finite-basis theorem for radical well-mixed difference ideals in a difference polynomial ring with one endomorphism over an arbitrary difference field. 
The result extends to every finitely difference-generated difference algebra over such a field and is equivalent to the ascending chain condition for radical well-mixed
difference ideals, hence resolving a long-standing open problem in difference algebra.
The proof proceeds in two stages. Over an inversive difference coefficient field, we establish a uniform inverse-transform bound relating the perfect closure of a finite system to its radical well-mixed closure. This uses Noetherian
induction in a fixed truncated polynomial ring, together
with generic freeness and going-down for flat ring
homomorphisms. We then prove that the radical well-mixed closure of a system over a difference field $K$ is the contraction of its radical well-mixed closure over the inversive closure of $K$. This contraction result follows from successive prime lifting along flat extensions
and removes the inversiveness assumption without requiring
the coefficient-field extension to be finitely generated.
\end{abstract}
\maketitle

\section{Introduction}

The Ritt--Raudenbush basis theorem states that the perfect difference ideals of a difference polynomial ring in finitely many difference indeterminates satisfy the ascending chain condition; equivalently, every perfect difference ideal admits a finite perfect basis \cite{RittRaudenbush,WibmerNotes}. Here finite generation refers to the perfect closure, not to ordinary ideal generation. Perfect ideals are radical, well-mixed, and reflexive. Removing reflexivity leads to a different finiteness question.

In his seminal work on the elementary theory of the Frobenius
automorphisms, Hrushovski studied radical well-mixed difference
ideals and proved a finite-basis theorem for finitely generated
well-mixed difference algebras under a boundedness condition
\cite[Lemma~4.35]{Hrushovski}. Levin showed that the analogous assertion for arbitrary well-mixed ideals fails, even in one difference indeterminate \cite{Levin}. Positive results were subsequently established for monomial and binomial difference ideals \cite{WangMonomial,WangBinomial}. Wibmer formulated the unrestricted radical well-mixed chain condition as Conjecture B and related it to finiteness of minimal prime difference ideals; he also proved the relevant finiteness assertion for finitely difference-generated difference Hopf algebras \cite{WibmerGroups}. Related results for partial difference algebraic groups are discussed in \cite{McGrath}.

A \emph{difference field} is a field $K$ with a (necessarily injective) endomorphism $\sigma_K$. It is \emph{inversive} if $\sigma_K$ is surjective. Throughout the paper, we work with a single distinguished
endomorphism. We denote the difference polynomial ring
over $K$ in the difference variables $y_1,\ldots,y_n$ by
\[
 R=K\{y_1,\ldots,y_n\}
  =K[y_{i,j}:1\leq i\leq n,\ j\in\NN],
\]
where $y_{i,j}=\sigma^j(y_i)$, and the endomorphism $\sigma$ of $R$ is determined by
\(
 \sigma|_K=\sigma_K,\) and 
 \(
 \sigma(y_{i,j})=y_{i,j+1}.
\)
For a subset $U$ of a difference ring $A$, we write $\CC_A(U)$ for the radical well-mixed difference ideal generated by $U$, and $\PP_A(U)$ for its perfect closure. These closures will be defined precisely in Section~\ref{sec:closures}.

Our main result is as follows.

\begin{theorem}[Radical well-mixed basis theorem]\label{thm:main}
Let $(K,\sigma_K)$ be any difference field and let $n\ge0$. Every radical well-mixed difference ideal $I$ of $K\{y_1,\ldots,y_n\}$ has a finite radical well-mixed basis: there is a finite set $F\subseteq I$ such that
\[
 I=\CC_{K\{y_1,\ldots,y_n\}}(F).
\]
Consequently, the radical well-mixed difference ideals of $K\{y_1,\ldots,y_n\}$ satisfy the ascending chain condition.
\end{theorem}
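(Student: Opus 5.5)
We follow the two-stage strategy announced in the abstract. Since the ascending chain condition follows formally from the finite-basis statement (a union of a chain is radical well-mixed, so it equals $\CC(F)$ for a finite $F$, and $F$ already lies in some member of the chain), we concentrate on proving the finite basis.

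\textbf{Stage one: $K$ inversive.} We would relate $\CC_R(F)$ to $\PP_R(F)$. Recall that $\PP_R(F)$ is obtained from $\CC_R(F)$ by closing under $\sigma f\in I\Rightarrow f\in I$, and that perfect ideals satisfy the chain condition by Ritt--Raudenbush. The key intermediate claim is a \emph{uniform inverse-transform bound}. For a finite $F$ there should exist $N$ such that
\[
 f\in\PP_R(F)\iff \sigma^N(f)\in\CC_R(F),
\]
that is, $\PP_R(F)=\{f:\sigma^N f\in\CC_R(F)\}$.

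To prove this we would pass to a truncated polynomial ring $R_{\le m}=K[y_{i,j}:j\le m]$ large enough to contain $F$. The minimal primes of $\CC_R(F)$ are controlled by prime well-mixed ideals, and their failure to be reflexive is detected on finitely many levels. We would then run a Noetherian induction on the closed subsets of $\operatorname{Spec}R_{\le m}$ cut out by $F$ and its transforms. On each irreducible piece, generic freeness makes the transition map $R_{\le m}\to R_{\le m+1}$ (induced by $\sigma$) flat after inverting some element. Going-down for flat maps then lets us lift primes, showing that the defect between perfect and well-mixed closure stabilises after finitely many shifts. The leftover closed set is smaller, so the induction terminates and yields $N$.

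Given the bound, the finite basis follows. Let $I$ be radical well-mixed. The perfect closure $\PP_R(I)=\PP_R(F_0)$ for some finite $F_0\subseteq I$. Consider the ascending chain $\CC_R(F_0)\subseteq\CC_R(F_0\cup F_1)\subseteq\cdots$ inside $I$. Each $f\in I$ has $\sigma^N f\in\CC_R(F_0)$, and we would argue that $I$ is recovered from $\CC_R(F_0)$ together with finitely many additional elements. Here we apply Noetherianity of the finitely many ``inverse-image'' layers $\{f:\sigma^k f\in\CC_R(F_0)\}$, $k\le N$, each handled by a Ritt--Raudenbush-type argument. The uniformity of $N$ must therefore be formulated to persist as $F$ grows inside $I$. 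We expect this to be the main obstacle: the bound has to be uniform over the chain, not merely for a single fixed $F$, which is why it must be proved in a fixed truncated ring by Noetherian induction.

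\textbf{Stage two: general $K$.} Let $K^*$ be the inversive closure and $R^*=K^*\{y\}$. We would show the contraction formula $\CC_R(F)=\CC_{R^*}(F)\cap R$. The inclusion $\subseteq$ is clear. For the reverse inclusion, $R\to R^*$ is flat because it is a free extension of coefficients. We would show that every prime well-mixed ideal of $R$ containing $F$ is the contraction of a prime well-mixed ideal of $R^*$. This is done by successively lifting primes along the flat maps $K^{\sigma^{-k}}\{y\}$ via going-down, compatibly with $\sigma$, and taking a union or compactness limit, so no finiteness of $K^*/K$ is needed. Given the contraction formula, a chain of radical well-mixed ideals in $R$ extends to a chain in $R^*$, which stabilises by Stage one, and contracting back gives stabilisation in $R$. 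Hence finite bases exist in $R$.
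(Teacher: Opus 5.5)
Your Stage two is essentially the paper's descent argument (Proposition~\ref{prop:prime-descent} and Theorem~\ref{thm:contraction}). You lift a prime difference ideal one coefficient field at a time by going-down along $K_{m-1}\{\mathbf y\}\subseteq K_m\{\mathbf y\}$. The detail you leave implicit is that one goes down below the prime $\sigma_m^{-1}(Q_{m-1})$, whose contraction contains $Q_{m-1}$; this forces $\sigma(Q_m)\subseteq Q_{m-1}\subseteq Q_m$. Your chain argument then correctly transfers the ascending chain condition.

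The genuine gap is in Stage one, at ``given the bound, the finite basis follows.'' The bound only tells you $\CC_R(F_0)\subseteq I\subseteq\PP_R(F_0)=\sigma^{-N}(\CC_R(F_0))$. Nothing you propose gives a chain condition for radical well-mixed ideals in this interval. The layers $\sigma^{-k}(\CC_R(F_0))$ are radical well-mixed but in general not reflexive, so Ritt--Raudenbush-type arguments, which work with perfect ideals, do not apply. In any case, an arbitrary $I$ in the interval need not be one of the layers. Your diagnosis of the obstacle is also misplaced: no uniformity of $N$ along a chain is ever needed. A bound for a single finite $F$ suffices, provided it is fed into the right mechanism.

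That mechanism is absent from your plan. Take a maximal radical well-mixed ideal $I$ without a finite basis; it is prime by Lemma~\ref{lem:maxcounter}. Pick a finite $F\subseteq I$ with $\PP_R(F)=P:=\PP_R(I)$, and set $J=\CC_R(F)$ and $A=R/J$.
\begin{enumerate}
\item The bound says exactly that $\ker\tau^N=P/J$. Hence $D:=\tau^N(A)$ is a difference subring isomorphic to $R/P$.
\item $\operatorname{HB}(R/P)$ holds \emph{by maximality of $I$}: every nonzero radical well-mixed ideal of $R/P$ pulls back to one strictly containing $P\supseteq I$.
\item Inversiveness of $K$ puts $K$, and all $\bar y_{i,j}$ with $j\ge N$, inside $D$. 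So $A$ is a finitely generated ordinary $D$-algebra.
\item A Hilbert-basis-type transfer (Theorem~\ref{thm:finite-type}) then gives $\operatorname{HB}(A)$, hence a finite basis of $I$, a contradiction. That theorem is proved via generic finite generation over a domain and the annihilator lemma, again inside a maximal-counterexample argument.
\end{enumerate}

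Your sketch of the bound itself is also too vague to check. In the paper, the induction runs over ordinary ideals of one fixed $A_h$. The maximal counterexample $\mathfrak p$ is shown to be prime and inverse-compatible. The induction step is the boundary identity $\CC_R(\mathfrak p)=\PP_R(\mathfrak p)\cap\CC_R(\mathfrak p,b)$ with $b\in B_h\setminus\mathfrak p$.

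The relevant flat map is not $R_{\le m}\to R_{\le m+1}$. It is the localized shift $E[a^{-1}]\to E[b^{-1}]$ on $E=R/[\mathfrak p]_R$. Its flatness comes from factoring $\sigma_E$ as an isomorphism $E\cong S$ followed by the base change of $B_h/(\mathfrak p\cap B_h)\to A_h/\mathfrak p$, to which generic freeness applies. Going-down is then used iteratively to produce a reflexive prime below every prime difference ideal avoiding $b$.
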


The corresponding quotient-algebra statement is worth recording separately. It does not require the endomorphism on the quotient to be injective.

\begin{corollary}\label{cor:intro}
Let $(K,\sigma_K)$ be any difference field, and let $A$ be a  finitely difference-generated difference $K$-algebra.  Then every radical well-mixed difference ideal $J$ of $A$ has a finite radical well-mixed basis. Equivalently, every ascending chain of radical well-mixed difference ideals of $A$ stabilizes.
\end{corollary}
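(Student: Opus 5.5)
The plan is to deduce Corollary~\ref{cor:intro} from Theorem~\ref{thm:main} by pulling ideals back along a presentation of $A$. Since $A$ is finitely difference-generated over $K$, choose generators $a_1,\dots,a_n$ and the difference $K$-algebra surjection $\pi\colon R=K\{y_1,\ldots,y_n\}\to A$ with $y_i\mapsto a_i$. This map is $\sigma$-equivariant and surjective, because the image contains $K$ and all $\sigma^j(a_i)$. Its kernel $\mathfrak a$ is a difference ideal, but in general it is not reflexive, radical or well-mixed. This is exactly why injectivity of $\sigma_A$ is not needed.

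The key step is the correspondence. Let $J$ be a radical well-mixed difference ideal of $A$; I claim $I=\pi^{-1}(J)$ is a radical well-mixed difference ideal of $R$.
\begin{enumerate}
\item It is a difference ideal: if $\pi(f)\in J$, then $\pi(\sigma f)=\sigma_A\pi(f)\in J$.
\item It is radical: $f^m\in I$ gives $\pi(f)^m\in J$, so $\pi(f)\in J$.
\item It is well-mixed: $fg\in I$ gives $\pi(f)\pi(g)\in J$, hence $\pi(f)\sigma_A(\pi(g))=\pi(f\sigma(g))\in J$, so $f\sigma(g)\in I$.
\end{enumerate}
Surjectivity of $\pi$ then gives $\pi(I)=J$.

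Now apply Theorem~\ref{thm:main} to get a finite $F\subseteq I$ with $I=\CC_R(F)$, and show $J=\CC_A(\pi(F))$.
\begin{itemize}
\item The inclusion $\CC_A(\pi(F))\subseteq J$ holds because $\pi(F)\subseteq J$ and $J$ is radical well-mixed.
\item For the reverse inclusion, $\pi^{-1}(\CC_A(\pi(F)))$ is radical well-mixed by the previous step and contains $F$. Hence it contains $\CC_R(F)=I$, and applying $\pi$ gives $J\subseteq \CC_A(\pi(F))$.
\end{itemize}
This uses only the defining minimality of $\CC$ from Section~\ref{sec:closures}. In particular it does not depend on any iterative description of the closure, which is the one place where a non-injective $\sigma_A$ might otherwise cause trouble.

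For the equivalence with the ascending chain condition, I would use the standard argument.
\begin{itemize}
\item Suppose every radical well-mixed ideal has a finite basis, and let $J_1\subseteq J_2\subseteq\cdots$ be a chain. The union is again a radical well-mixed difference ideal, since each defining condition involves finitely many elements. A finite basis of the union lies in some $J_N$, so the union equals $\CC_A$ of a subset of $J_N$, which is contained in $J_N$, and the chain stabilizes.
\item Conversely, assuming the chain condition, if some $J$ had no finite basis we could choose $f_1,f_2,\ldots\in J$ with $f_{k+1}\notin\CC_A(f_1,\ldots,f_k)$. This produces a strictly increasing chain, a contradiction.
\end{itemize}
Alternatively, ACC can be transported directly: a chain in $A$ pulls back to a chain in $R$, which stabilizes by Theorem~\ref{thm:main}, and it stabilizes in $A$ because $\pi(\pi^{-1}(J))=J$.

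I expect no serious obstacle. The only point requiring care is checking that the closure operator $\CC_A$ is characterized as the smallest radical well-mixed difference ideal containing a set, without any reflexivity or injectivity hypothesis, so that the minimality argument above is valid.
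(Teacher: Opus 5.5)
Your proposal is correct and follows essentially the same route as the paper. You pull $J$ back along the presentation $\pi\colon K\{y_1,\ldots,y_n\}\twoheadrightarrow A$, apply Theorem~\ref{thm:main} to $\pi^{-1}(J)$, and push the finite basis forward (you use minimality of $\CC_A$ where the paper cites \eqref{eq:quotient}, which amounts to the same thing), then prove the equivalence with the chain condition by the same two standard arguments.
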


The proof also yields a uniform comparison of two closure operations. For any ordinary ideal $J$ and $m\ge0$, $\sigma^{-m}(J)$ always means the \emph{inverse image} under $\sigma^m$, not application of an inverse endomorphism.

\begin{theorem}[Uniform inverse-transform bound]\label{thm:uniform-intro}
Let $(K,\sigma_K)$ be an arbitrary difference field, let $R=K\{y_1,\ldots,y_n\}$, and let $F\subset R$ be finite. There is $N=N(F)\ge0$ such that
\begin{equation}\label{eq:uniform-main}
 \PP_R(F)=\sigma^{-N}\!\bigl(\CC_R(F)\bigr).
\end{equation}
Equivalently, $\sigma^N(\PP_R(F))\subseteq\CC_R(F)$.
\end{theorem}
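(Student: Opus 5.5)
We follow the two-stage strategy from the abstract. First we prove the bound when $K$ is inversive, and then we descend to an arbitrary $K$ using the inversive closure $K^{\inv}$.

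\emph{Preliminary reductions.} One inclusion is formal. Because $\PP_R(F)$ is perfect, it satisfies $\sigma^{-1}(\PP_R(F))=\PP_R(F)$. Also $\CC_R(F)\subseteq\PP_R(F)$, so $\sigma^{-N}(\CC_R(F))\subseteq\PP_R(F)$ for every $N$. It therefore suffices to find $N$ with $\sigma^N(\PP_R(F))\subseteq\CC_R(F)$, and the two displayed forms of the statement are then equivalent. We describe $\PP_R(F)$ as the union $\bigcup_m \sigma^{-m}(\CC_R(F))$. This union is a radical well-mixed ideal, and it is reflexive, since $\sigma^{-1}$ of a union of an increasing chain is the union of the preimages. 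Hence $\PP_R(F)$ is the ascending union of the ideals $I_m:=\sigma^{-m}(\CC_R(F))$, and the whole theorem reduces to showing that this chain stabilizes. Note that each $I_m$ is radical and well-mixed. Also $\PP_R(F)$ is finitely perfectly generated by Ritt--Raudenbush, say $\PP_R(F)=\PP_R(G)$ with $G$ finite. This does \emph{not} directly give stabilization: $G\subseteq I_m$ for some $m$, but $I_m$ is not reflexive, so this does not yield $\PP_R(G)\subseteq I_m$.

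\emph{Inversive case.} Here the plan is to control primes. Write $\CC_R(F)$ as the intersection of prime well-mixed... more usefully, of the $\sigma$-stable prime ideals $\mathfrak p$ containing $F$ (prime ideals with $\sigma^{-1}(\mathfrak p)=\mathfrak p$ are the well-mixed primes relevant to radical well-mixed closures). Similarly, $\PP_R(F)$ is the intersection of the reflexive prime difference ideals containing $F$. The needed statement becomes: there is $N$ such that for every prime difference ideal $\mathfrak p\supseteq F$ (with $\sigma^{-1}\mathfrak p=\mathfrak p$), the ideal $\sigma^{-N}(\mathfrak p)$... more precisely $\mathfrak p^{*}=\bigcup_m\sigma^{-m}(\mathfrak p)$ satisfies $\sigma^N(\mathfrak p^*)\subseteq\mathfrak p$. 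To make $N$ uniform, we work in a fixed truncation $R_{\le h}=K[y_{i,j}: j\le h]$, where $h$ exceeds the orders of the elements of $F$ and $G$. We do Noetherian induction on the Zariski closed subsets of $\operatorname{Spec}R_{\le h}$ cut out by $F$ and its shifts. On each locally closed stratum, generic freeness makes the shift map $R_{\le h}\to R_{\le h+1}$, $y_{i,j}\mapsto y_{i,j+1}$ (which makes sense because $\sigma_K$ is an automorphism), flat after localization. Going-down for flat maps then lets us lift a chain of primes from the shifted truncation back. This shows that a prime failing reflexivity already fails it within a bounded number of shifts on that stratum. Finitely many strata give a uniform $N$.

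\emph{Non-inversive case and main obstacle.} For general $K$, let $R'=K^{\inv}\{y\}$. We prove the contraction identities $\CC_{R'}(F)\cap R=\CC_R(F)$ and $\PP_{R'}(F)\cap R=\PP_R(F)$. Given these, $\sigma^N(\PP_R(F))\subseteq\sigma^N(\PP_{R'}(F))\cap R\subseteq\CC_{R'}(F)\cap R=\CC_R(F)$. The proof of the contraction goes by lifting each well-mixed prime $\mathfrak p$ of $R$ to a $\sigma$-compatible prime of $R'$. We do this successively along the flat, indeed free, extensions $R\otimes_K\sigma^{-k}(K)$, using going-down and lying-over at each step and choosing compatible primes in a direct limit. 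Since $K^{\inv}$ need not be finitely generated over $K$, a compactness/direct-limit argument is needed. We expect the main obstacle to be the uniformity step in the inversive case: ensuring that the Noetherian induction terminates with a single $N$, so that the strata where flatness fails are handled by shrinking closed sets rather than by increasing $N$ without bound.
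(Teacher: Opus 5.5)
\paragraph{A gap in the inversive case.} Your descent stage follows the paper's route (Proposition~\ref{prop:prime-descent} and Corollary~\ref{cor:uniform-descent}). The inversive case, however, has a genuine gap, and it starts with your reformulation. Primes with $\sigma^{-1}(\mathfrak p)=\mathfrak p$ are the \emph{reflexive} prime difference ideals, and their intersection over $\mathfrak p\supseteq F$ is $\PP_R(F)$. By contrast, $\CC_R(F)$ is the intersection of \emph{all} prime difference ideals containing $F$ (Lemma~\ref{lem:prime-representation}). With your restriction, $\mathfrak p^*=\mathfrak p$ and the claim is empty. Without the restriction, a uniform $N$ with $\sigma^N(\mathfrak p^*)\subseteq\mathfrak p$ for every such prime does not exist. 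Take $n=1$, $F=\varnothing$ and $\mathfrak p_r=(y_r,y_{r+1},\ldots)$. Then $\mathfrak p_r^*=(y_0,y_1,\ldots)$, and $\sigma^N(\mathfrak p_r^*)\subseteq\mathfrak p_r$ forces $N\ge r$, although $\PP_R(\varnothing)=\CC_R(\varnothing)=0$. Since $\mathfrak p_r\cap R_{\le h}=0$ for $r>h$, all these primes lie over the generic point of any fixed truncation. So your intermediate claim, that a non-reflexive prime fails reflexivity within boundedly many shifts on a stratum, is also false.

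The reflexivity defect of individual primes is the wrong quantity. What must be shown is this: for the relevant prime $\mathfrak p$ of the truncation and a suitable $b\notin\mathfrak p$, every prime difference ideal of $R$ containing $\mathfrak p$ but not $b$ contains a \emph{reflexive} prime containing $\mathfrak p$. Such a prime then contains $\PP_R(\mathfrak p)$, so off $b=0$ no shift is needed at all. This is Theorem~\ref{thm:flat-identity}, $\CC_E(0)=\PP_E(0)\cap\CC_E(b)$. It is proved by applying flat going-down infinitely often to obtain $Q_0\supseteq Q_1\supseteq\cdots$ with $\sigma^{-1}(Q_{j+1})=Q_j$, and then intersecting.

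\paragraph{Where flatness is needed.} The flatness you invoke is in the wrong place. The shift $R_{\le h}\to R_{\le h+1}$ of polynomial rings is trivially free, and going-down for it says nothing about difference ideals. What is needed is flatness of the localized shift $E[a^{-1}]\to E[b^{-1}]$, with $b=\sigma(a)$, on the difference quotient $E=R/[\mathfrak p]_R$.

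The paper obtains this from the tensor presentation $E\cong S\otimes_B D$, where $D=A_h/\mathfrak p$ and $B=B_h/(\mathfrak p\cap B_h)$. In this presentation $\sigma_E$ factors as $E\xrightarrow{\sim}S\to E$, and generic freeness of $D$ over $B$ then gives the flatness. The presentation requires $\mathfrak p$ to be inverse-compatible, i.e.\ $\sigma^{-1}(\mathfrak p\cap B_h)\subseteq\mathfrak p\cap A_h^-$.

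This is why the Noetherian induction runs over \emph{arbitrary} ordinary ideals of the fixed ring $A_h=R_{\le h}$, not over closed sets cut out by $F$ and its shifts (those shifts leave any fixed truncation). The induction proceeds as follows.
\begin{enumerate}
\item A maximal counterexample is prime, by Lemma~\ref{lem:product-intersection}.
\item It is inverse-compatible, because adjoining $\sigma^{-1}(g)$ leaves $\PP_R$ unchanged and costs one extra shift. This is where the exponent grows.
\item A boundary element $b\in B_h\setminus\mathfrak p$ is then adjoined inside $A_h$.
\end{enumerate}
Your sketch contains neither the inverse-compatibility reduction nor the tensor presentation. It therefore has no mechanism producing a uniform $N$, as you yourself suspect.

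\paragraph{Minor points on the descent stage.} Only the $\CC$-contraction is needed, since $\PP_R(F)\subseteq\PP_{R'}(F)$ by functoriality. The lift $Q_m$ must be chosen by going-down \emph{inside} $\sigma_m^{-1}(Q_{m-1})$ so that it is difference-stable; plain lying-over does not give this. A countable induction along $K_m=\sigma^{-m}(K)$ suffices, with no compactness argument.
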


The finiteness assumption on $F$ can be removed using the
Ritt--Raudenbush basis theorem; see
Corollary~\ref{cor:uniform-arbitrary}.
The key point is that the exponent $N$ is uniform over
all elements of the perfect closure of the fixed system $F$.
The corresponding elementwise statement, in which the
exponent may depend on the element, is elementary but
does not suffice for the finite-basis argument.
No effective bound for $N$ in terms of degrees and orders
is asserted.

We first prove the uniform comparison over inversive fields, using only ordinary Noetherianity of one truncated polynomial ring. The crucial geometric input is a localized flat-lifting lemma for prime difference ideals. A finite-type transfer principle then allows us to combine the uniform bound with the classical perfect basis theorem. The remaining coefficient-field hypothesis is removed by a separate descent argument. If $K^{\inv}$ denotes the inversive closure of $K$, we prove the contraction identity
\begin{equation}\label{eq:descent-intro}
 \CC_{K^{\inv}\{\mathbf y\}}(U)\cap K\{\mathbf y\}
  =\CC_{K\{\mathbf y\}}(U)\qquad(U\subseteq K\{\mathbf y\}).
\end{equation}
The important point is that no finite-generation assumption on $K^{\inv}/K$ is needed. Unlike extension of ordinary prime ideals, the appropriate extension of a prime difference ideal must be selected using the difference operator. We do so one coefficient field at a time, applying flat going-down.

Section~\ref{sec:closures} establishes the closure operations. Section~\ref{sec:lifting} develops the localized lifting lemma. The inverse-transform bound over an inversive field is proved in Section~\ref{sec:uniform}. Section~\ref{sec:transfer} gives the finite-type transfer argument, and Section~\ref{sec:inversive-main} completes the inversive case. Section~\ref{sec:descent} establishes coefficient descent and proves Theorems~\ref{thm:main} and~\ref{thm:uniform-intro} in their stated generality. We conclude with examples and limitations.

\section{Difference ideals and their closures}\label{sec:closures}
In this section, we recall basic notions from difference algebra
\cite{Cohn,Levin} and establish properties of radical well-mixed
and perfect closures that will be used later.

All rings are commutative with identity. A difference ring $(A,\sigma)$ is a ring with an endomorphism $\sigma:A\to A$; the endomorphism need not be injective. A \emph{difference ideal} is an ordinary ideal $I$ for which $\sigma(I)\subseteq I$.

\begin{definition}\label{def:ideals}
A difference ideal $I$ is \emph{radical} if $a^r\in I$ with $r\ge1$ implies $a\in I$; it is \emph{well-mixed} if $ab\in I$ implies $a\sigma(b)\in I$; it is \emph{reflexive} if $\sigma(a)\in I$ implies $a\in I$. It is \emph{perfect} if
\[
 \sigma^{i_1}(a)\cdots\sigma^{i_s}(a)\in I
 \quad\Longrightarrow\quad a\in I
 \qquad (s\ge1,\ i_1,\ldots,i_s\in\NN).
\]
An ordinary prime ideal which is a difference ideal is called a \emph{prime difference ideal}; reflexivity
is not required.
\end{definition}

For $U\subseteq A$, its difference ideal is $[U]_A=(\sigma^j(u):u\in U,\ j\ge0)_A$. Arbitrary intersections of radical well-mixed ideals, or of perfect ideals, remain in the respective class. We use $\CC_A(U)$ and $\PP_A(U)$ for the least such ideals containing $U$. We write $\CC_A(U,b)=\CC_A(U\cup\{b\})$ and similarly for $\PP$. The notation $\CC_A(0)$ means $\CC_A(\varnothing)$; it need not be the zero ideal. We suppress subscripts when there is no ambiguity.

\begin{lemma}[Perfectness criterion]\label{lem:perfect-criterion}
A difference ideal is perfect if and only if it is radical, well-mixed, and reflexive.
\end{lemma}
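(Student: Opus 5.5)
Both directions can be proved directly from Definition~\ref{def:ideals}. For the forward direction, let $I$ be perfect. Radicality is the case $i_1=\cdots=i_s=0$. Reflexivity is the case $s=1$, $i_1=1$. Well-mixedness takes a short argument. Suppose $ab\in I$ and set $c=a\sigma(b)$. Then
\[
 c\,\sigma(c)=a\sigma(b)\sigma(a)\sigma^2(b)=\sigma(ab)\cdot a\sigma^2(b)\in I,
\]
because $\sigma(ab)\in I$ and $I$ is a difference ideal. Since $c\sigma(c)=\sigma^0(c)\sigma^1(c)$ is a product of transforms of $c$, perfectness gives $c\in I$.

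For the converse, let $I$ be radical, well-mixed and reflexive, and suppose $\sigma^{i_1}(a)\cdots\sigma^{i_s}(a)\in I$. The plan is to shift all exponents up to a common value and then pull back with reflexivity.

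First, I record a multi-factor form of well-mixedness: if $x_1\cdots x_s\in I$, then $x_1\cdots x_{k-1}\sigma(x_k)x_{k+1}\cdots x_s\in I$. To see this, apply well-mixedness to the product $(x_1\cdots\widehat{x_k}\cdots x_s)\cdot x_k$.

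Now let $M=\max_j i_j$. Applying the multi-factor form repeatedly, raise each factor $\sigma^{i_j}(a)$ to $\sigma^M(a)$. This gives $\sigma^M(a)^s\in I$. By radicality, $\sigma^M(a)=\sigma^M(a)\in I$, and applying reflexivity $M$ times yields $a\in I$. Hence $I$ is perfect.

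No step here is a real obstacle. The only point needing care is the identity $c\sigma(c)=\sigma(ab)\,a\sigma^2(b)$ in the forward direction. It relies on $I$ being a difference ideal, so that $\sigma(ab)\in I$, which is part of the hypothesis.
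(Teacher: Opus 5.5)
Your proof is correct and follows the paper's argument essentially verbatim: the same witness $c=a\sigma(b)$ with $c\,\sigma(c)=\sigma(ab)\,a\sigma^2(b)\in I$ for well-mixedness, and the same converse that raises all transform indices to their maximum, then applies radicality and $M$-fold reflexivity. Your justification of the multi-factor form of well-mixedness, via $(x_1\cdots\widehat{x_k}\cdots x_s)\cdot x_k$, spells out a step the paper states in one line.
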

\begin{proof}
Suppose $I$ is perfect. Taking all transform indices equal to zero gives radicality, and taking one transform of index one gives reflexivity. If $ab\in I$, set $u=a\sigma(b)$. Since
\[
 u\sigma(u)=a\sigma^2(b)\sigma(ab)\in I,
\]
perfectness implies $u\in I$. Thus $I$ is well-mixed.

Conversely, suppose the three conditions hold and a product of transforms of $a$ belongs to $I$. By well-mixedness, any factor in a product belonging to $I$ may be replaced by its transform. Raising all transform indices to their maximum $m$ gives $\sigma^m(a)^s\in I$. Radicality yields $\sigma^m(a)\in I$; reflexivity applied $m$ times yields $a\in I$.
\end{proof}

\begin{lemma}[Quotients and inverse images]\label{lem:functorial}
Let $f:(A,\sigma)\to(B,\tau)$ be a difference-ring homomorphism. The inverse image of a radical well-mixed (respectively perfect) ideal of $B$ is an ideal of the same class in $A$. For every $U\subseteq A$,
\[
 f(\CC_A(U))\subseteq\CC_B(f(U)),\qquad
 f(\PP_A(U))\subseteq\PP_B(f(U)).
\]
If $f$ is surjective with kernel $J$, then
\begin{equation}\label{eq:quotient}
 f^{-1}\bigl(\CC_B(f(U))\bigr)=\CC_A(U\cup J),\qquad
 f^{-1}\bigl(\PP_B(f(U))\bigr)=\PP_A(U\cup J).
\end{equation}
\end{lemma}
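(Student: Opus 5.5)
The plan has three steps, carried out in this order: the inverse-image claim, the two image inclusions, and the surjective case. Each step only checks the defining implications of Definition~\ref{def:ideals} and then uses minimality of the closures.

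\emph{Inverse images.} Let $I\subseteq B$ be radical well-mixed and put $I'=f^{-1}(I)$. Then $I'$ is an ordinary ideal. It is a difference ideal because $f\sigma=\tau f$ gives $f(\sigma(a))=\tau(f(a))\in\tau(I)\subseteq I$ for $a\in I'$. It is radical because $f(a)^r=f(a^r)\in I$ forces $f(a)\in I$. It is well-mixed because $ab\in I'$ gives $f(a)f(b)\in I$, hence $f(a)\tau(f(b))=f(a\sigma(b))\in I$. For perfect $I$ the same computation, applied to products $\prod_k\sigma^{i_k}(a)$, shows that $I'$ is perfect. Alternatively one can add reflexivity via $f(\sigma(a))=\tau(f(a))$ and invoke Lemma~\ref{lem:perfect-criterion}.

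\emph{Image inclusions.} By the first step, $f^{-1}(\CC_B(f(U)))$ is a radical well-mixed ideal of $A$. It contains $U$, so it contains $\CC_A(U)$ by minimality. Applying $f$ gives the first inclusion, and the perfect case is identical.

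\emph{Surjective case.} For \eqref{eq:quotient}, write $J=\ker f$. The ideal $f^{-1}(\CC_B(f(U)))$ is radical well-mixed and contains both $U$ and $J$, so it contains $\CC_A(U\cup J)$. For the reverse inclusion, set $L=\CC_A(U\cup J)$. Since $J\subseteq L$, we have $f^{-1}(f(L))=L+J=L$. I then claim that $f(L)$ is a radical well-mixed ideal of $B$.
\begin{itemize}
\item It is an ideal because $f$ is surjective.
\item It is stable under $\tau$ since $\tau(f(a))=f(\sigma(a))$.
\item It is radical: if $f(a)^r\in f(L)$, then $a^r\in L+J=L$, so $a\in L$.
\item It is well-mixed: given $f(a)f(b)\in f(L)$ with $a,b\in A$ (using surjectivity), we get $ab\in L$, so $a\sigma(b)\in L$ and $f(a)\tau(f(b))\in f(L)$.
\end{itemize}
The perfect case is the same argument with products of transforms. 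Since $f(L)\supseteq f(U)$, minimality gives $\CC_B(f(U))\subseteq f(L)$, and taking preimages yields $f^{-1}(\CC_B(f(U)))\subseteq L$.

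The only point needing care is this last step. Every element of $B$ must be lifted to $A$, and each membership in $f(L)$ must be pulled back to $L$. Both uses rely on $J\subseteq L$, which is exactly why $J$ is adjoined to $U$ in \eqref{eq:quotient}. Injectivity of $\tau$ is never used, consistent with the remark before Corollary~\ref{cor:intro}.
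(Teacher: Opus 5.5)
Your proposal is correct and follows essentially the same route as the paper: it checks the defining implications for inverse images, uses minimality for the two image inclusions, and uses the ideal correspondence under a surjection, relying on $J\subseteq L$. The only cosmetic difference is that you verify perfectness of $f(L)$ directly on products of transforms, whereas the paper checks reflexivity and appeals to Lemma~\ref{lem:perfect-criterion}; both work equally well.
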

\begin{proof}
The inverse-image assertions follow by applying $f$ to the defining implications, including the well-mixed implication $ab\in I\Rightarrow a\sigma(b)\in I$. Minimality gives the two inclusions. For a surjection, the usual correspondence between ideals of $A$ containing $J$ and ideals of $B$ preserves difference stability, radicality, well-mixedness, and reflexivity in both directions. Lemma~\ref{lem:perfect-criterion} deals with perfectness. Formula~\eqref{eq:quotient} follows.
\end{proof}

\begin{lemma}[Finitarity]\label{lem:finitarity}
For every subset $U$ of a difference ring $A$,
\begin{equation}\label{eq:finitary}
 \CC_A(U)=\bigcup_{\substack{F\subseteq U\\F\text{ finite}}}\CC_A(F),\qquad
 \PP_A(U)=\bigcup_{\substack{F\subseteq U\\F\text{ finite}}}\PP_A(F).
\end{equation}
In particular, the union of an ascending chain of radical well-mixed ideals is radical well-mixed, and the corresponding assertion holds for perfect ideals.
\end{lemma}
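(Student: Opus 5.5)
The plan is to show that the union $V=\bigcup_{F}\CC_A(F)$, taken over finite $F\subseteq U$, is itself a radical well-mixed difference ideal containing $U$. This gives $\CC_A(U)\subseteq V$. The reverse inclusion is immediate from monotonicity of the closure, since $F\subseteq U$ gives $\CC_A(F)\subseteq\CC_A(U)$. The argument for $\PP_A$ is identical, with the perfectness condition of Definition~\ref{def:ideals} in place of radicality and well-mixedness.

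The key observation is that the family $\{\CC_A(F)\}$ is directed. For finite $F_1,F_2\subseteq U$, both $\CC_A(F_1)$ and $\CC_A(F_2)$ are contained in $\CC_A(F_1\cup F_2)$, and $F_1\cup F_2$ is again finite. Each defining condition involves only finitely many elements, so it passes to the directed union:
\begin{enumerate}
\item \emph{Ideal.} If $a,b\in V$, choose a single finite $F$ with $a,b\in\CC_A(F)$. Then $a+b$ and $ra$ lie in $\CC_A(F)\subseteq V$ for every $r\in A$.
\item \emph{Difference stability.} If $a\in\CC_A(F)$, then $\sigma(a)\in\CC_A(F)\subseteq V$.
\item \emph{Radicality.} If $a^r\in V$, then $a^r\in\CC_A(F)$ for some finite $F$, so $a\in\CC_A(F)\subseteq V$.
\item \emph{Well-mixedness.} If $ab\in\CC_A(F)$, then $a\sigma(b)\in\CC_A(F)\subseteq V$.
\end{enumerate}
For perfectness, a hypothesis $\sigma^{i_1}(a)\cdots\sigma^{i_s}(a)\in V$ places this single element in some $\CC$-type ideal $\PP_A(F)$, and that ideal is perfect. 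Finally, $U\subseteq V$ because $u\in\CC_A(\{u\})$ for each $u\in U$.

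The "in particular" clause follows by the same argument. The union of an ascending chain of radical well-mixed (respectively perfect) ideals is a directed union, and each membership test involves only finitely many elements, hence only one member of the chain.

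There is no real obstacle here. The only point to watch is that each closure condition is finitary, involving finitely many hypotheses and one conclusion. This is what allows a single finite $F$ to be chosen for each instance.
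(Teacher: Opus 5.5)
Your proposal is correct and takes essentially the same approach as the paper. You show that the family of closures of finite subsets is directed and that each defining condition passes to a directed union, since it involves only one membership hypothesis. You then conclude by minimality in one direction and monotonicity in the other, and the ascending-chain clause is the same directed-union argument (the phrase ``$\CC$-type ideal $\PP_A(F)$'' is only a wording slip).
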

\begin{proof}
Let $(I_\lambda)_\lambda$ be a nonempty family of difference
ideals directed by inclusion, and put $I=\bigcup_\lambda I_\lambda$.
Then $I$ is a difference ideal: any two of its elements lie
in a common $I_\lambda$, and each $I_\lambda$ is an ideal
stable under $\sigma$.
Moreover, if every $I_\lambda$ is radical and well-mixed,
then so is $I$; if every $I_\lambda$ is perfect, then so is $I$.
Indeed, any power or product whose membership in $I$
occurs in these defining conditions already belongs to
some $I_\lambda$, where the required implication holds.

The families on the right-hand side of \eqref{eq:finitary}
are directed, since the closures associated with finite
sets $F,G\subseteq U$ are both contained in the corresponding
closure of $F\cup G$.
Thus each union belongs to the appropriate class and
contains $U$. By minimality, it contains the corresponding
closure of $U$. The reverse inclusion follows from
monotonicity, proving both equalities in (\ref{eq:finitary}).

The assertion about ascending chains follows  from the directed-union argument above.
\end{proof}

The following lemma is due to Hrushovski
\cite[Lemma~2.10]{Hrushovski}, which provides prime representations for radical well-mixed difference ideals. We include a short proof
for completeness.
\begin{lemma}\label{lem:prime-representation}
Every radical well-mixed difference ideal of a difference
ring $A$ is an intersection of prime difference ideals.
Consequently, for every subset $U\subseteq A$,
\begin{equation}\label{eq:prime-representation}
 \CC_A(U)=
 \bigcap_{\substack{Q\supseteq U\\
                    Q\text{ prime difference ideal of }A}}Q,
\end{equation}
where an empty intersection is interpreted as $A$.
\end{lemma}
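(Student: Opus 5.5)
Let $I$ be a radical well-mixed difference ideal of $A$ and let $a\in A\setminus I$. I will produce a prime difference ideal $Q\supseteq I$ with $a\notin Q$. The natural candidate is an ideal maximal among difference ideals containing $I$ and disjoint from the multiplicative set $S=\{a^k:k\ge0\}$. Such an ideal is not obviously prime, though, because the difference condition interacts badly with products. So I will instead take $Q$ maximal among radical well-mixed difference ideals containing $I$ and disjoint from $S$. Zorn's lemma applies: the family is nonempty since it contains $I$, and by Lemma~\ref{lem:finitarity} the union of a chain of radical well-mixed ideals is radical well-mixed, and it stays disjoint from $S$. Once every such $Q$ is known to be prime, the first assertion follows. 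Formula~\eqref{eq:prime-representation} then follows at once: apply the first assertion to $I=\CC_A(U)$, and note that every prime difference ideal is radical and well-mixed (if $ab\in Q$ then $a\in Q$, or $b\in Q$ and so $\sigma(b)\in Q$). Hence each prime difference ideal containing $U$ contains $\CC_A(U)$.

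The key step is primality of $Q$. Suppose $bc\in Q$ with $b,c\notin Q$. I will use the colon ideal $(Q:b)=\{x\in A: xb\in Q\}$ and check three things.
\begin{enumerate}
\item $(Q:b)$ is an ideal containing $Q$ and $c$.
\item $(Q:b)$ is radical: if $x^r b\in Q$, then $(xb)^r\in Q$, so $xb\in Q$.
\item $(Q:b)$ is a difference ideal: if $xb\in Q$, well-mixedness gives $\sigma(x)b\in Q$ (apply the implication to the product $bx$).
\end{enumerate}
For well-mixedness, suppose $xyb\in Q$. Then $(xb)y\in Q$ gives $xb\,\sigma(y)\in Q$, so $x\sigma(y)\in(Q:b)$. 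Thus $(Q:b)$ is radical well-mixed and strictly contains $Q$. By maximality it meets $S$, say $a^k b\in Q$. Then $b\in(Q:a^k)$, and the same argument applied to $(Q:a^k)$ shows it is radical well-mixed and strictly contains $Q$. So it meets $S$, say $a^{k+l}\in Q$, which contradicts $Q\cap S=\varnothing$.

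It remains to see that $Q\ne A$, which holds because $1=a^0\in S$. Hence $Q$ is a proper prime ideal, stable under $\sigma$ since it is a difference ideal, and $a\notin Q$. The main obstacle is exactly the primality step, which is where well-mixedness is essential. The point to check carefully is that colon ideals of radical well-mixed ideals are again radical well-mixed; the verifications above show they are. If $a$ were nilpotent modulo $I$ it would lie in $I$ by radicality, so $S\cap I=\varnothing$ holds from the start.
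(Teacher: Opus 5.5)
Your proof is correct, but it takes a different route from the paper.

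\textbf{Your route.} You fix $a\notin I$ and take $Q$ maximal among radical well-mixed ideals containing $I$ and avoiding $\{a^k\}$. Zorn applies via Lemma~\ref{lem:finitarity}: it is stated for ascending chains, but its directed-union proof covers arbitrary chains. You then get primality by showing that the colon ideals $(Q:b)$ are again radical well-mixed. That verification is exactly the computation of Lemma~\ref{lem:ann}, carried out in $A/Q$.

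\textbf{The paper's route.} The paper passes to the reduced well-mixed ring $D=A/I$ and shows that every ordinary minimal prime $\mathfrak q$ of $D$ is already $\sigma$-stable. Since $\mathfrak q$ is minimal, $\mathfrak qD_{\mathfrak q}$ is the nilradical of the reduced ring $D_{\mathfrak q}$, so it vanishes. Hence each $x\in\mathfrak q$ satisfies $sx=0$ for some $s\notin\mathfrak q$. Well-mixedness gives $s\sigma(x)=0$, so $\sigma(x)\in\mathfrak q$. The conclusion follows because the nilradical of $D$ is the intersection of its minimal primes.

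\textbf{What each buys.} The paper's argument is shorter given standard commutative algebra. It also gives a sharper statement: the ordinary minimal primes over a radical well-mixed ideal are automatically difference ideals, so $I$ is the intersection of its minimal primes.

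Your argument avoids localization and facts about minimal primes, and uses only the definitions. It parallels the classical argument that perfect ideals are intersections of prime difference ideals. It also works verbatim for any multiplicative set $S$ disjoint from $I$. The trade-off is that the prime you produce is only maximal among radical well-mixed ideals avoiding $S$; it is not identified as a minimal prime over $I$.
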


\begin{proof}
Let $I$ be a radical well-mixed difference ideal of $A$,
and set $D=A/I$. Then $D$ is reduced and well-mixed (i.e., $ab=0$ implies that $a\sigma(b)=0$).
We show that every ordinary minimal prime $\mathfrak q$
of $D$ is closed under $\sigma$.

The prime ideals of $D_{\mathfrak q}$ correspond to the
prime ideals of $D$ contained in $\mathfrak q$.
By minimality of $\mathfrak q$, the only prime ideal of
$D_{\mathfrak q}$ is therefore $\mathfrak qD_{\mathfrak q}$.
Since $D_{\mathfrak q}$ is reduced, its nilradical is zero,
so $\mathfrak qD_{\mathfrak q}=0$.
Thus, for each $x\in\mathfrak q$, we have $x/1=0$ in
$D_{\mathfrak q}$, and hence $sx=0$ for some
$s\in D\setminus\mathfrak q$.
Well-mixedness gives $s\sigma(x)=0$.
As $\mathfrak q$ is prime and $s\notin\mathfrak q$,
we obtain $\sigma(x)\in\mathfrak q$.

Thus every ordinary minimal prime of $D$ is a prime
difference ideal. Their intersection is the nilradical
of $D$, which is zero. Taking inverse images under
$A\to D$ expresses $I$ as an intersection of prime
difference ideals containing $I$.
Since every prime difference ideal is radical and well-mixed, a prime difference ideal contains $U$
if and only if it contains $\CC_A(U)$.
Applying the preceding result to $I=\CC_A(U)$ gives
\eqref{eq:prime-representation}.
\end{proof}

\begin{lemma}[Reflexive closure]\label{lem:reflexive-closure}
If $I$ is a radical well-mixed difference ideal, then
\begin{equation}\label{eq:reflexive}
 \PP_A(I)=\bigcup_{m\ge0}\sigma^{-m}(I).
\end{equation}
Consequently, for every $U\subseteq A$,
\begin{equation}\label{eq:perfect-union}
 \PP_A(U)=\bigcup_{m\ge0}\sigma^{-m}\bigl(\CC_A(U)\bigr).
\end{equation}
\end{lemma}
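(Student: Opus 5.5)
Set $J=\bigcup_{m\ge0}\sigma^{-m}(I)$. The plan is to show that $J$ is a perfect ideal containing $I$, and that every perfect ideal containing $I$ contains $J$. These two facts give \eqref{eq:reflexive}.

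The minimality direction is immediate. Let $P\supseteq I$ be perfect and let $a\in\sigma^{-m}(I)$. Then $\sigma^m(a)\in I\subseteq P$, so $m$ applications of reflexivity give $a\in P$. Hence $J\subseteq\PP_A(I)$.

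For the other direction, I first check that $J$ is a difference ideal containing $I$.
\begin{itemize}
\item The sets $\sigma^{-m}(I)$ form an ascending chain. If $\sigma^m(a)\in I$, then $\sigma^{m+1}(a)\in I$ because $I$ is a difference ideal.
\item Each $\sigma^{-m}(I)$ is an ideal, as the inverse image of an ideal under a ring endomorphism.
\item Hence $J$ is an ideal.
\item $J$ is $\sigma$-stable: if $\sigma^m(a)\in I$ then $\sigma^m(\sigma(a))=\sigma(\sigma^m(a))\in I$.
\end{itemize}
I then verify the three properties required by Lemma~\ref{lem:perfect-criterion}.
\begin{itemize}
\item Reflexive: if $\sigma(a)\in J$, say $\sigma^m(\sigma(a))\in I$, then $a\in\sigma^{-(m+1)}(I)$.
\item Radical: if $a^r\in J$, then $\sigma^m(a)^r=\sigma^m(a^r)\in I$, so $\sigma^m(a)\in I$ by radicality of $I$.
\item Well-mixed: if $ab\in J$, then $\sigma^m(a)\sigma^m(b)\in I$. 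Well-mixedness of $I$ gives $\sigma^m(a)\sigma^{m+1}(b)=\sigma^m(a\sigma(b))\in I$, so $a\sigma(b)\in J$.
\end{itemize}
Alternatively, each $\sigma^{-m}(I)$ is radical well-mixed by Lemma~\ref{lem:functorial}, since $\sigma^m:(A,\sigma)\to(A,\sigma)$ is a difference homomorphism, and Lemma~\ref{lem:finitarity} handles the ascending union. Either way $J$ is perfect and contains $I=\sigma^{-0}(I)$, so $\PP_A(I)\subseteq J$.

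For \eqref{eq:perfect-union}, apply \eqref{eq:reflexive} to $I=\CC_A(U)$. It remains to note that $\PP_A(\CC_A(U))=\PP_A(U)$. Any perfect ideal is radical and well-mixed, so a perfect ideal contains $U$ if and only if it contains $\CC_A(U)$.

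There is no genuine obstacle here. The only point needing care is that $\sigma$ need not be injective, so one must work with inverse images throughout and never invert $\sigma$; the verifications above only push elements forward by $\sigma^m$.
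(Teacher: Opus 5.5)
Your proof is correct and takes essentially the paper's route. You show that $\bigcup_m\sigma^{-m}(I)$ is a perfect ideal containing $I$ and lies in every perfect ideal containing $I$; your ``alternative'' via Lemma~\ref{lem:functorial} and Lemma~\ref{lem:finitarity} is exactly the paper's argument, and your direct elementwise checks are a self-contained variant. You also make explicit the step $\PP_A(\CC_A(U))=\PP_A(U)$, which the paper leaves implicit.
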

\begin{proof}
The inverse images $\sigma^{-m}(I)$ are radical well-mixed by Lemma~\ref{lem:functorial}. They form an ascending chain because $\sigma(I)\subseteq I$. Their union is radical well-mixed by Lemma~\ref{lem:finitarity}, and it is reflexive since $\sigma(x)\in \sigma^{-m}(I)$  implies $x\in \sigma^{-(m+1)}(I)$. It is thus perfect. Every perfect ideal containing $I$ contains all these inverse images, proving \eqref{eq:reflexive}. Apply it to $I=\CC_A(U)$ to obtain \eqref{eq:perfect-union}.
\end{proof}

\begin{lemma}[Products and intersections]\label{lem:product-intersection}
For subsets $U,V\subseteq A$, write $UV=\{uv:u\in U,v\in V\}$. Then
\begin{equation}\label{eq:intersection}
 \CC_A(U)\cap\CC_A(V)=\CC_A(UV),\qquad
 \PP_A(U)\cap\PP_A(V)=\PP_A(UV).
\end{equation}
The same assertions hold with the ordinary product when $U,V$ are ideals.
\end{lemma}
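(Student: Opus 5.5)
The plan is to prove the radical well-mixed identity through the prime representation of Lemma~\ref{lem:prime-representation}, and then to derive the perfect identity from it using Lemma~\ref{lem:reflexive-closure}.

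\emph{Radical well-mixed case.} Since $UV\subseteq U$-multiples, every element of $UV$ lies in both $\CC_A(U)$ and $\CC_A(V)$. Both closures are ideals, and their intersection is again radical well-mixed, so minimality gives $\CC_A(UV)\subseteq\CC_A(U)\cap\CC_A(V)$. For the reverse inclusion, I will use \eqref{eq:prime-representation} to write $\CC_A(UV)$ as the intersection of the prime difference ideals $Q\supseteq UV$. It then suffices to show that each such $Q$ contains $\CC_A(U)\cap\CC_A(V)$. Suppose $Q\not\supseteq U$, and pick $u\in U\setminus Q$. For every $v\in V$ we have $uv\in Q$, so primality forces $v\in Q$. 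Hence $Q\supseteq V$, and therefore $Q\supseteq\CC_A(V)$, because $Q$ is radical well-mixed. Thus every such $Q$ contains $\CC_A(U)$ or $\CC_A(V)$, and in either case it contains their intersection. The case of an empty intersection of primes is consistent: if no prime contains $UV$, then $\CC_A(UV)=A$.

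\emph{Perfect case.} By \eqref{eq:perfect-union},
\[
 \PP_A(U)\cap\PP_A(V)=\bigcup_{m}\sigma^{-m}(\CC_A(U))\cap\bigcup_{m}\sigma^{-m}(\CC_A(V)).
\]
Both unions are ascending, so any element $x$ of the intersection lies in $\sigma^{-m}(\CC_A(U))\cap\sigma^{-m}(\CC_A(V))$ for a single $m$. Then $\sigma^m(x)$ lies in $\CC_A(U)\cap\CC_A(V)=\CC_A(UV)$, which gives $x\in\sigma^{-m}(\CC_A(UV))\subseteq\PP_A(UV)$. The inclusion $\PP_A(UV)\subseteq\PP_A(U)\cap\PP_A(V)$ follows by minimality exactly as before, since an intersection of perfect ideals is perfect.

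\emph{Ordinary products of ideals.} Now let $U,V$ be ideals and let $U\cdot V$ denote the ideal they generate as a product. Then $UV\subseteq U\cdot V$, and $U\cdot V$ is contained in the ideal generated by $UV$, which lies inside both $\CC_A(UV)$ and $\PP_A(UV)$. The closures of $UV$ and of $U\cdot V$ therefore coincide, and the same identities hold for $U\cdot V$.

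The only step requiring genuine input is the reverse inclusion in the radical well-mixed case, and it rests entirely on Hrushovski's prime representation. The perfect case reduces to that one by taking inverse images along an ascending chain.
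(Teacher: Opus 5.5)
Your proof is correct and follows essentially the same route as the paper. The radical well-mixed identity comes from the prime representation \eqref{eq:prime-representation}, since a prime containing $UV$ contains $U$ or $V$. The perfect identity comes from synchronizing indices in the ascending unions of \eqref{eq:perfect-union}, and the ideal case from the fact that passing to ordinary ideal spans changes neither closure; you simply spell out the easy inclusions and the primality step that the paper leaves implicit.
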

\begin{proof}
An ordinary prime ideal contains $UV$ if and only if it contains $U$ or contains $V$. Equation~\eqref{eq:prime-representation} proves the first equality. For the second, use \eqref{eq:perfect-union}; since the inverse-image chains are increasing, one may synchronize their indices and obtain
\[
 \PP_A(U)\cap\PP_A(V)
 =\bigcup_{m\ge0}\sigma^{-m}\bigl(\CC_A(U)\cap\CC_A(V)\bigr)
 =\PP_A(UV).
\]
Passing to ordinary ideal spans changes neither closure.
\end{proof}

\begin{remark}\label{rem:prelim-scope}
All results of this section hold for arbitrary difference rings, without an injectivity or Noetherianity hypothesis. In particular, the finitary statements do not presuppose the ascending chain condition to be proved.
\end{remark}

We conclude this section by recalling the classical
Ritt--Raudenbush basis theorem
\cite{RittRaudenbush}; see also
\cite[Theorem~2.5.11]{Levin}.

\begin{theorem}[Ritt--Raudenbush Basis Theorem]\label{thm:Ritt--Raudenbush}
Let $(K,\sigma_K)$ be an arbitrary difference field and
let $R=K\{y_1,\ldots,y_n\}$.
Every perfect difference ideal $P$ of $R$ has a finite
perfect basis. That is, there exists a finite set $E\subseteq P$
such that
\(
 P=\PP_R(E).
\)
\end{theorem}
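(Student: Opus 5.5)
We follow the classical route: a Noetherian-induction argument by contradiction, resting on the intersection formula of Lemma~\ref{lem:product-intersection} and the finitarity of Lemma~\ref{lem:finitarity}. We call a perfect ideal \emph{bad} if it is not of the form $\PP_R(E)$ with $E$ finite. Assume bad perfect ideals exist. Lemma~\ref{lem:finitarity} shows that the union of a chain of bad perfect ideals is perfect. It is also bad: if the union equalled $\PP_R(E)$ with $E$ finite, then $E$ would lie in some member of the chain, and that member would equal the union and be finitely generated. Zorn's lemma therefore gives a maximal bad perfect ideal $P$.

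First we show that $P$ is prime. If $ab\in P$ with $a,b\notin P$, then $\PP_R(P,a)$ and $\PP_R(P,b)$ strictly contain $P$. By maximality they are finitely generated, say by $P_0\cup\{a\}$ and $P_0\cup\{b\}$ with $P_0\subseteq P$ finite; we may take a common $P_0$ because each generating set can be chosen inside $P\cup\{a\}$, respectively $P\cup\{b\}$. Lemma~\ref{lem:product-intersection} then gives
\[
P\subseteq \PP_R(P,a)\cap\PP_R(P,b)=\PP_R\bigl((P_0\cup\{a\})(P_0\cup\{b\})\bigr)\subseteq \PP_R(P_0,ab)\subseteq P.
\]
Hence $P$ is finitely generated, a contradiction. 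So $P$ is a prime, reflexive difference ideal.

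Next we produce a finite basis for the prime $P$, which contradicts badness. We induct on $n$, passing to $K\{y_1,\dots,y_{n-1}\}$ through the intersection $P\cap K\{y_1,\ldots,y_{n-1}\}$. By induction that intersection is $\PP(E_0)$ for a finite $E_0$. We then choose, from the elements of $P$ that involve $y_n$, a characteristic set $A$ with respect to an orderly ranking on the $y_{n,j}$, taking an element of minimal rank and its leading coefficient (initial) $I_A$ and separant $S_A$. The classical claim is
\[
P=\PP(E_0\cup A):(I_AS_A)^\infty,
\]
with $I_AS_A\notin P$, by minimality of rank and reflexivity. Write $H=I_AS_A$. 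Since $\PP(E_0\cup A,H)$ strictly contains $P$, maximality shows it is finitely generated by some finite set $E_1\subseteq P\cup\{H\}$. The standard splitting, once more via Lemma~\ref{lem:product-intersection}, then gives
\[
P=\PP\bigl(E_0\cup A\cup (E_1\cdot\{H\})\bigr)\cap P=\PP(E_0\cup A)\cup\ldots,
\]
and from this one checks that $P=\PP(E_0\cup A\cup H\cdot E_1')$ for a finite $E_1'$.

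The main obstacle is this last step, the difference analogue of Ritt's reduction: showing that every $f\in P$ reduces, modulo transforms of $A$, to a remainder lying in $P$ that is reduced with respect to $A$, and hence (by minimality) lies in the lower-variable part, after multiplying by products of transforms of $H$. Reflexivity of $P$ is what allows us to cancel these transforms. I would carry this out with pseudo-division on the highest transform $y_{n,j}$, treating each $\sigma^k(A)$ as an ordinary polynomial. Where the argument is delicate, I would fall back on the cited proof in \cite[Theorem~2.5.11]{Levin}, since this theorem is classical and the paper only recalls it.
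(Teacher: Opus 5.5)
The paper gives no proof of this theorem; it only quotes it from \cite{RittRaudenbush} and \cite[Theorem~2.5.11]{Levin}, so your outline has to stand on its own. Your first two steps are correct. Zorn's lemma yields a maximal perfect ideal $P$ without a finite basis, and your computation with $\PP_R(P_0,ab)$ shows that $P$ is prime, hence a reflexive prime difference ideal.

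The gap is in the third step, which carries the real content of the theorem, and the specific choices you make there fail.
\begin{enumerate}
\item One element of least rank is not enough in difference algebra. An autoreduced set may contain several elements whose leaders are transforms of the same $y_i$, with increasing order and decreasing degree. Take $K=\mathbb{Q}$, $\sigma_K=\mathrm{id}$, $n=1$, and $P=\ker(y_j\mapsto(-1)^j\sqrt{2})$, whose quotient is $\mathbb{Q}(\sqrt{2})$ with $\sqrt{2}\mapsto-\sqrt{2}$. Here $E_0=\varnothing$, the least-rank element is $A=y_0^2-2$, and $H=2y_0\notin P$. But $y_1+y_0\in P$ is reduced with respect to $A$ and has higher rank than $A$. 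It does not lie in $\PP_R(A):H^\infty$, because that ideal is contained in the perfect prime $\ker(y_j\mapsto\sqrt{2})$, which contains $A$ but neither $H$ nor $y_1+y_0$. So your ``classical claim'' is false, and the remainder need not fall into the lower-variable part.
\item The inference ``$I_A\notin P$ by minimality of rank'' fails when $A$ is chosen only among elements involving $y_n$. For $P=(y_{1,j}:j\ge0)\subseteq K\{y_1,y_2\}$, the element $y_{1,0}y_{2,0}$ has the least possible rank, yet its initial $y_{1,0}$ lies in $P$.
\item The separant plays no role in difference reduction: $\sigma^j(A)$ has the same degree in $\sigma^j(u_A)$ as $A$ has in $u_A$, with initial $\sigma^j(I_A)$. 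Worse, it can vanish. Over $\mathbb{F}_p(t)$ with $\sigma_K=\mathrm{id}$, the least-rank element of $\ker(y_j\mapsto t^{1/p})$ is $y_0^p-t$, with $S_A=0$. Then $H=0\in P$, and $\PP_R(E_0\cup A,H)$ does not strictly contain $P$.
\end{enumerate}

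What is needed is a characteristic set $A=\{A_1,\ldots,A_p\}$ of $P$ itself, i.e.\ an autoreduced subset of $P$ of least rank for an orderly ranking of all the $y_{i,j}$ compatible with $\sigma$. It is finite, because elements with leaders in the same $y_i$ have strictly decreasing degrees. Put $H=\prod_i I_{A_i}$.

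Each $I_{A_i}$ is nonzero and reduced with respect to $A$, so $I_{A_i}\notin P$. Reflexivity then keeps every $\sigma^j(I_{A_i})$ out of the prime $P$, and in particular $H\notin P$. Hence the remainder of any $f\in P$ lies in $P$ and is reduced, so it is $0$; that is, $\prod_{i,j}\sigma^j(I_{A_i})^{e_{ij}}f\in[A]_R$. Multiplying by suitable transforms of $f$ puts a product of transforms of $Hf$ into $[A]_R$, so perfectness gives $HP\subseteq\PP_R(A)$.

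The final splitting should then read as follows. By finitarity, $\PP_R(P,H)=\PP_R(E_1,H)$ for some finite $E_1\subseteq P$, and
\[
P=P\cap\PP_R(E_1,H)=\PP_R(PE_1\cup PH)\subseteq\PP_R(E_1\cup A)\subseteq P .
\]
Your displayed version is not meaningful, since a union of ideals appears in it, and the factor $H$ belongs on $P$, not on $E_1$. With a characteristic set of $P$ itself, neither the induction on $n$ nor $E_0$ is needed. Deferring the reduction step to \cite{Levin} leaves exactly this essential part unproved.
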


\section{Localized flat lifting}\label{sec:lifting}

We use standard properties of flat ring homomorphisms: stability under base change and localization, and going-down for chains of ordinary prime ideals \cite{Eisenbud1995}. The localizations below are ordinary localizations and are not assumed to carry a difference endomorphism.

\begin{lemma}[Prime lifting]\label{lem:local-prime}
Let $(E,\sigma)$ be a difference ring, take $a\in E$ and put $b=\sigma(a)$. Suppose the ordinary ring homomorphism
\begin{equation}\label{eq:lambda}
 \lambda:E[a^{-1}]\longrightarrow E[b^{-1}],\qquad
  \frac{x}{a^r}\longmapsto\frac{\sigma(x)}{b^r}
\end{equation}
is flat. If $Q$ is a prime difference ideal of $E$ with $b\notin Q$, there is a prime difference ideal $Q_1$ such that
\begin{equation}\label{eq:lift-properties}
 Q_1\subseteq Q,\qquad b\notin Q_1,\qquad \sigma^{-1}(Q_1)=Q.
\end{equation}
\end{lemma}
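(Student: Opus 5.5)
The plan is to apply going-down to the flat map $\lambda$ of \eqref{eq:lambda}, once for a suitable chain of primes in $E[a^{-1}]$, and then contract the resulting prime back to $E$. Difference stability of the contracted prime will then come for free from the identity $\sigma^{-1}(Q_1)=Q$.

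\emph{Step 1: set-up.} Put $P=\sigma^{-1}(Q)$, an ordinary prime of $E$. Since $Q$ is a difference ideal, $\sigma(Q)\subseteq Q$, so $Q\subseteq P$. Since $\sigma(a)=b\notin Q$, we have $a\notin P$, and hence also $a\notin Q$. Therefore $QE[a^{-1}]\subseteq PE[a^{-1}]$ is a chain of primes of $E[a^{-1}]$, and each contracts to $Q$, respectively $P$, in $E$. On the other side, $b\notin Q$, so $QE[b^{-1}]$ is a prime of $E[b^{-1}]$.

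\emph{Step 2: compute a contraction under $\lambda$.} We have $\lambda(x/a^r)=\sigma(x)/b^r\in QE[b^{-1}]$ if and only if $\sigma(x)\in Q$, using that $Q$ is prime with $b\notin Q$. This holds if and only if $x\in P$. Hence $\lambda^{-1}(QE[b^{-1}])=PE[a^{-1}]$.

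\emph{Step 3: going-down.} Since $\lambda$ is flat, going-down applies to the chain $QE[a^{-1}]\subseteq PE[a^{-1}]$ lying under $QE[b^{-1}]$. It yields a prime $\mathfrak Q\subseteq QE[b^{-1}]$ of $E[b^{-1}]$ with $\lambda^{-1}(\mathfrak Q)=QE[a^{-1}]$. Let $Q_1$ be the contraction of $\mathfrak Q$ to $E$. Then $Q_1$ is prime, $b\notin Q_1$, and $Q_1\subseteq Q$.

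For $x\in E$ we have the chain of equivalences
\[
\sigma(x)\in Q_1\iff \sigma(x)/1\in\mathfrak Q\iff \lambda(x/1)\in\mathfrak Q\iff x/1\in QE[a^{-1}]\iff x\in Q.
\]
The last equivalence uses $a\notin Q$. Thus $\sigma^{-1}(Q_1)=Q$.

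\emph{Step 4: $Q_1$ is a difference ideal.} We need $\sigma(Q_1)\subseteq Q_1$, which is equivalent to $Q_1\subseteq\sigma^{-1}(Q_1)$. By Step 3, $\sigma^{-1}(Q_1)=Q$, and $Q_1\subseteq Q$ was already established, so this holds. Hence $Q_1$ is a prime difference ideal satisfying all of \eqref{eq:lift-properties}.

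The only real input is going-down for the flat map $\lambda$. The main point requiring care is the bookkeeping of contractions through the two localizations in Steps 2 and 3, specifically checking that $a\notin P$ and $a\notin Q$ so that the relevant primes survive in $E[a^{-1}]$. Once $\sigma^{-1}(Q_1)=Q\supseteq Q_1$ is in hand, no separate argument is needed to make $Q_1$ a difference ideal.
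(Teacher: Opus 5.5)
Your proposal is correct and follows the paper's proof essentially step for step. You apply flat going-down to the chain $QE[a^{-1}]\subseteq\sigma^{-1}(Q)E[a^{-1}]$ lying under $QE[b^{-1}]$, contract the resulting prime to $E$, and obtain difference stability from $Q_1\subseteq Q=\sigma^{-1}(Q_1)$, with the same checks that $a\notin\sigma^{-1}(Q)$ and $a\notin Q$.
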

\begin{proof}
Put $E_0=E[a^{-1}]$ and $E_1=E[b^{-1}]$. Since $b\notin Q$ and $Q$ is stable, both $a\notin Q$ and $a\notin\sigma^{-1}(Q)$. Thus
\[
 \mathfrak p=QE_0\subseteq\mathfrak p'=\sigma^{-1}(Q)E_0
\]
are prime ideals of $E_0$, and $\mathfrak q=QE_1$ is a prime of $E_1$. For every fraction $x/a^r$,
\[
 \lambda(x/a^r)\in\mathfrak q
 \iff \sigma(x)\in Q
 \iff x\in\sigma^{-1}(Q).
\]
Therefore $\lambda^{-1}(\mathfrak q)=\mathfrak p'$. Flat going-down gives a prime $\mathfrak q_1\subseteq\mathfrak q$ in $E_1$ with $\lambda^{-1}(\mathfrak q_1)=\mathfrak p$. Contract $\mathfrak q_1$ to an ordinary prime $Q_1$ of $E$. Localization shows $Q_1\subseteq Q$ and $b\notin Q_1$. Furthermore,
\[
 x\in Q
 \iff x/1\in\mathfrak p
 \iff \sigma(x)/1\in\mathfrak q_1
 \iff \sigma(x)\in Q_1.
\]
Hence $\sigma^{-1}(Q_1)=Q$. Finally $Q_1\subseteq Q=\sigma^{-1}(Q_1)$ implies $\sigma(Q_1)\subseteq Q_1$.
\end{proof}

\begin{lemma}[A reflexive prime below a prime]\label{lem:reflexive-prime}
Under the hypotheses of Lemma~\ref{lem:local-prime}, every prime difference ideal $Q$ avoiding $b$ contains a reflexive prime difference ideal.
\end{lemma}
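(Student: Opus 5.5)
Iterate Lemma~\ref{lem:local-prime} to build a descending chain of prime difference ideals, then take its intersection.

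\textbf{Construction.} Start from $Q_0=Q$, which avoids $b$. Applying Lemma~\ref{lem:local-prime} to $Q_0$ gives a prime difference ideal $Q_1\subseteq Q_0$ with $b\notin Q_1$ and $\sigma^{-1}(Q_1)=Q_0$. Since $Q_1$ again avoids $b$, the lemma applies to it, and inductively we obtain prime difference ideals
\[
 Q=Q_0\supseteq Q_1\supseteq Q_2\supseteq\cdots,\qquad b\notin Q_k,\qquad \sigma^{-1}(Q_{k+1})=Q_k .
\]
This uses dependent choice, which is harmless here.

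\textbf{The limit.} Put $Q_\infty=\bigcap_{k\ge0}Q_k$.
\begin{enumerate}
\item $Q_\infty$ is a prime ideal, since it is the intersection of a descending chain of primes. Indeed, if $xy\in Q_\infty$ and $x\notin Q_\infty$, then $x\notin Q_j$ for some $j$, so $x\notin Q_k$ for all $k\ge j$. Hence $y\in Q_k$ for all $k\ge j$, and therefore for all $k$ because the chain descends.
\item $Q_\infty$ is stable under $\sigma$, since each $Q_k$ is.
\item $Q_\infty$ avoids $b$, hence is proper, and $Q_\infty\subseteq Q$.
\item $Q_\infty$ is reflexive. If $\sigma(x)\in Q_\infty$, then $\sigma(x)\in Q_{k+1}$ for every $k$. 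This gives $x\in\sigma^{-1}(Q_{k+1})=Q_k$ for every $k$, so $x\in Q_\infty$.
\end{enumerate}
In fact inverse images commute with intersections, so $\sigma^{-1}(Q_\infty)=\bigcap_k\sigma^{-1}(Q_{k+1})=\bigcap_k Q_k=Q_\infty$.

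\textbf{Main point.} The only substantive step is that the hypotheses of Lemma~\ref{lem:local-prime} survive the iteration. The flatness hypothesis concerns only $E$, $a$ and $b$, not the prime, so it is unchanged. The condition $b\notin Q_{k}$ is part of the conclusion~\eqref{eq:lift-properties}. Once these are checked, the rest is routine.
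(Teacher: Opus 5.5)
Your proposal is correct and follows the paper's proof: you iterate Lemma~\ref{lem:local-prime} from $Q_0=Q$ to get a descending chain with $\sigma^{-1}(Q_{k+1})=Q_k$ and $b\notin Q_k$, then take the intersection. You then check, as the paper does, that the intersection is prime, difference-stable, reflexive and contained in $Q$.
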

\begin{proof}
Starting with $Q_0=Q$, apply Lemma~\ref{lem:local-prime} successively to construct
\[
 Q_0\supseteq Q_1\supseteq Q_2\supseteq\cdots,
 \qquad\sigma^{-1}(Q_{j+1})=Q_j,
 \qquad b\notin Q_j.
\]
The intersection $P=\bigcap_{j\ge0}Q_j$ is a proper prime ideal. Indeed, if $xy\in P$ and $x\notin P$, there is $k$ with $x\notin Q_k$. For all $j\ge k$, $x\notin Q_j$, hence $y\in Q_j$; the earlier ideals also contain $y$. Stability under $\sigma$ is inherited from the $Q_j$. Conversely, if $\sigma(x)\in P$, then $\sigma(x)\in Q_{j+1}$ for every $j$, so $x\in Q_j$ for every $j$. Thus $P$ is reflexive, and $P\subseteq Q$.
\end{proof}

\begin{theorem}[Localized flat-lifting identity]\label{thm:flat-identity}
Under the hypotheses of Lemma~\ref{lem:local-prime},
\begin{equation}\label{eq:flat-identity}
 \CC_E(0)=\PP_E(0)\cap\CC_E(b).
\end{equation}
\end{theorem}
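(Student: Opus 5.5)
The inclusion $\CC_E(0)\subseteq\PP_E(0)\cap\CC_E(b)$ is immediate from minimality, since both ideals on the right are radical well-mixed difference ideals. The substance is the reverse inclusion, which I will prove by comparing prime representations. By Lemma~\ref{lem:prime-representation}, $\CC_E(0)$ is the intersection of all prime difference ideals $Q$ of $E$. It therefore suffices to show that every element $x\in\PP_E(0)\cap\CC_E(b)$ lies in every prime difference ideal $Q$.

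Fix a prime difference ideal $Q$ and split into two cases according to whether $b\in Q$.

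\emph{Case $b\in Q$.} Then $Q$ is a prime difference ideal containing $b$. By \eqref{eq:prime-representation} applied to $U=\{b\}$, we have $\CC_E(b)\subseteq Q$, and hence $x\in Q$.

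\emph{Case $b\notin Q$.} Here Lemma~\ref{lem:reflexive-prime} applies, since the flatness hypothesis of Lemma~\ref{lem:local-prime} is in force. It gives a reflexive prime difference ideal $P\subseteq Q$. A prime is radical, and a prime difference ideal is well-mixed: if $ab\in P$ and $a\notin P$, then $b\in P$, so $\sigma(b)\in P$. Since $P$ is also reflexive, Lemma~\ref{lem:perfect-criterion} shows that $P$ is perfect. Hence $\PP_E(0)\subseteq P\subseteq Q$, and again $x\in Q$.

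In both cases $x\in Q$, which gives $\PP_E(0)\cap\CC_E(b)\subseteq\CC_E(0)$ and proves \eqref{eq:flat-identity}. The only real difficulty is producing a perfect prime inside a prime that avoids $b$. That is exactly what the flat going-down construction of Lemmas~\ref{lem:local-prime} and~\ref{lem:reflexive-prime} supplies, so given those lemmas the argument above is routine bookkeeping. The one point to check is that $P$ is a genuine prime difference ideal, meaning proper and $\sigma$-stable, which Lemma~\ref{lem:reflexive-prime} already records.
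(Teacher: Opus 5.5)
Your proposal is correct and follows essentially the same route as the paper: the trivial inclusion, then the prime representation of Lemma~\ref{lem:prime-representation}, split according to whether $b\in Q$, using Lemma~\ref{lem:reflexive-prime} to get a reflexive prime $P\subseteq Q$, which is perfect by Lemma~\ref{lem:perfect-criterion}. Your extra check that a prime difference ideal is well-mixed is a detail the paper leaves implicit.
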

\begin{proof}
The left-hand side is contained in the right-hand side. Conversely, let $Q$ be any prime difference ideal of $E$. If $b\in Q$, then $\CC_E(b)\subseteq Q$. If $b\notin Q$, Lemma~\ref{lem:reflexive-prime} produces a reflexive prime difference ideal $P\subseteq Q$. Lemma~\ref{lem:perfect-criterion} gives $\PP_E(0)\subseteq P\subseteq Q$. Thus the right-hand side of \eqref{eq:flat-identity} is contained in every prime difference ideal of $E$, and Lemma~\ref{lem:prime-representation} proves the result.
\end{proof}

\begin{remark}
No Noetherianity or injectivity of the difference endomorphism on $E$ is required. The map \eqref{eq:lambda} is not automatically flat: its flatness must be established in the application. The statement also permits zero localizations, since the case of a prime avoiding $b$ simply cannot occur if $b$ is nilpotent.
\end{remark}

\section{A uniform inverse-transform bound over inversive fields}\label{sec:uniform}

Throughout this section, we assume that $K$ is inversive.
Write $y_{i,j}=\sigma^j(y_i)$ and put
$R=K\{y_1,\ldots,y_n\}=K[y_{i,j}:1\leq i\leq n,\ j\geq 0]$. For $h\geq 1$, set
\begin{align*}
 A_h&=K[y_{i,j}:1\leq i\leq n,\ 0\leq j\leq h],\\
 A_h^-&=K[y_{i,j}:1\leq i\leq n,\ 0\leq j<h],\\
 B_h&=K[y_{i,j}:1\leq i\leq n,\ 1\leq j\leq h],\\
 R^+&=K[y_{i,j}:1\leq i\leq n,\ j\geq 1].
\end{align*}
Since $K$ is inversive, $\sigma$ induces ring isomorphisms
\(
 \sigma:A_h^-\xrightarrow{\sim}B_h\), and 
 \( 
 \sigma:R\xrightarrow{\sim}R^+.
\)

\begin{definition}
An ordinary ideal $\mathfrak p\subseteq A_h$ is
\emph{inverse-compatible} if
\[
 \sigma^{-1}(\mathfrak p\cap B_h)
 \subseteq \mathfrak p\cap A_h^-.
\]
Here $\sigma^{-1}$ denotes the inverse of
$\sigma:A_h^-\xrightarrow{\sim}B_h$, which sends $y_{i,j}$ to $y_{i,j-1}$ and acts on
coefficients by $\sigma_K^{-1}$.  
\end{definition}

The next lemma gives a tensor-product presentation of
$R/[\mathfrak p]_R$ and a corresponding factorization
of its difference endomorphism.

\begin{lemma}[Tensor presentation]\label{lem:tensor}
Let $\mathfrak p\subseteq A_h$ be an inverse-compatible ideal,
and set $ \mathfrak q=\mathfrak p\cap B_h$, $
 B=B_h/\mathfrak q,$ and $ D=A_h/\mathfrak p.$
Define
\[
 T=(\sigma^j(f):f\in\mathfrak p,\ j\geq 1)_{R^+},
 \qquad
 S=R^+/T,\qquad
 E=R/[\mathfrak p]_R.
\]
The natural maps from $B_h$ to $S$ and $D$ factor through
$B$, and the multiplication induces a canonical ring
isomorphism
\begin{equation}\label{eq:tensor}
 S\otimes_B D\xrightarrow{\sim}E,
 \qquad
 \overline{u}\otimes\overline{v}
 \longmapsto\overline{uv},
\end{equation}
where $u\in R^+$, $v\in A_h$.
Moreover, the difference
endomorphism $\sigma_E$ of $E$ is a composite
\begin{equation}\label{eq:sigma-factor}
 E\xrightarrow{\bar{\sigma}}S\xrightarrow{\iota}E,
\end{equation}
where $\bar{\sigma}:E\xrightarrow{\sim}S$ is induced by  $\sigma$, and  $\iota$  corresponds under \eqref{eq:tensor}  to the map
$s\mapsto s\otimes 1$. 
\end{lemma}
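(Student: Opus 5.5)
We will prove the isomorphism \eqref{eq:tensor} by writing both sides as quotients of one polynomial ring. We will then read off the factorization \eqref{eq:sigma-factor} from the isomorphism $\sigma:R\xrightarrow{\sim}R^+$.

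First the factorizations through $B$. For $D$ this is clear, since $\mathfrak q=\mathfrak p\cap B_h$ maps to zero in $D=A_h/\mathfrak p$. For $S$ we must show $\mathfrak q\subseteq T$. Take $g\in\mathfrak q$ and put $f=\sigma^{-1}(g)\in A_h^-$. Inverse-compatibility gives $f\in\mathfrak p$, so $g=\sigma(f)\in T$. This is the only place inverse-compatibility enters the tensor statement.

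Next the isomorphism. Write $R=R^+\otimes_{B_h}A_h$, as an ordinary polynomial ring over the common variables $y_{i,j}$ with $1\le j\le h$. Then
\[
 S\otimes_B D=(R^+/T)\otimes_{B_h}(A_h/\mathfrak p)\cong R/(TR+\mathfrak pR).
\]
The first equality holds because tensoring over $B$ equals tensoring over $B_h$ once both factors are $B$-modules. The map sends $\bar u\otimes\bar v$ to $\overline{uv}$. It remains to check $TR+\mathfrak pR=[\mathfrak p]_R$. This holds because $[\mathfrak p]_R$ is generated by the $\sigma^j(f)$ with $f\in\mathfrak p$ and $j\ge0$. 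The $j=0$ generators give $\mathfrak pR$, and the $j\ge1$ generators lie in $R^+$, so they generate $TR$. Hence the induced map is the canonical isomorphism. The one point needing care is the identification $R^+\otimes_{B_h}A_h\cong R$. It holds because the variable sets $\{j\ge1\}$ and $\{0\le j\le h\}$ meet exactly in the variables of $B_h$, and it is standard for polynomial rings.

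Finally the factorization of $\sigma_E$. The isomorphism $\sigma:R\to R^+$ carries $[\mathfrak p]_R$, which is generated by the $\sigma^j(f)$ with $j\ge0$, onto the ideal of $R^+$ generated by the $\sigma^{j+1}(f)$, which is $T$. Hence $\sigma$ induces an isomorphism $\bar\sigma:E\xrightarrow{\sim}S$. Let $\iota:S\to E$ be induced by the inclusion $R^+\subseteq R$. This is well defined because $T\subseteq[\mathfrak p]_R$. Then $\iota\circ\bar\sigma$ sends $\bar x$ to $\overline{\sigma(x)}=\sigma_E(\bar x)$. Under \eqref{eq:tensor}, $\iota(\bar u)=\overline{u\cdot1}$ corresponds to $\bar u\otimes1$, as claimed.

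We expect no serious obstacle. The main subtlety is the verification $\mathfrak q\subseteq T$, which uses inverse-compatibility together with the inversiveness of $K$, so that $\sigma^{-1}$ exists on $B_h$.
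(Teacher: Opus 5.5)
Your proposal is correct and follows the paper's proof essentially step for step. Inverse-compatibility gives $\mathfrak q\subseteq T$; the identification $R^+\otimes_{B_h}A_h\cong R$ together with $[\mathfrak p]_R=\mathfrak pR+TR$ yields the tensor isomorphism; and the isomorphism $\sigma:R\xrightarrow{\sim}R^+$, which carries $[\mathfrak p]_R$ onto $T$, gives $\bar\sigma$ and the factorization $\sigma_E=\iota\circ\bar\sigma$.
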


\begin{proof}
We first verify that $S$ is a $B$-algebra.
Indeed, for $g\in\mathfrak q$, inverse compatibility gives
\(
 f:=\sigma^{-1}(g)\in\mathfrak p\cap A_h^-,
\)
implying $g=\sigma(f)\in T$, proving $\mathfrak q\subseteq T$.
Hence $B_h\to S$ factors through $B$. The map $B_h\to D$
also factors through $B$, for  $ \mathfrak q=\mathfrak p\cap B_h$.

Note that \(A_h=B_h[y_{1,0},\ldots,y_{n,0}]\), 
\( R=R^+[y_{1,0},\ldots,y_{n,0}],
\) and  \([\mathfrak p]_R=\mathfrak pR+TR\).
Consequently, multiplication gives a canonical
isomorphism
\(
 R^+\otimes_{B_h}A_h\xrightarrow{\sim}R.
\)
Taking the corresponding quotients in the tensor
product, we obtain
\[  S\otimes_B D
 \cong (R^+/T)\otimes_{B_h}(A_h/\mathfrak p)\cong R/(TR+\mathfrak pR)
 =E.
\]
These isomorphisms send
$\overline{u}\otimes\overline{v}$ to $\overline{uv}$,
which proves \eqref{eq:tensor}.

Finally, under the ring isomorphism
$\sigma:R\xrightarrow{\sim}R^+$, the ideal
$[\mathfrak p]_R$ is sent to $T$.  Thus $\sigma$ induces
\[
 \bar{\sigma}:E\xrightarrow{\sim}S,\qquad
 f+[\mathfrak p]_R\longmapsto\sigma(f)+T.
\]
Since $T\subseteq[\mathfrak p]_R$, the inclusion
$R^+\subseteq R$ induces a map $\iota:S\to E$.
For every $f\in R$,
\[
 \iota\bigl(\theta(f+[\mathfrak p]_R)\bigr)
 =\sigma(f)+[\mathfrak p]_R
 =\sigma_E(f+[\mathfrak p]_R).
\]
This proves \eqref{eq:sigma-factor}. The description
of $\iota$ as $s\mapsto s\otimes 1$ follows from
\eqref{eq:tensor}.
\end{proof}

\begin{lemma}[Boundary equation]\label{lem:boundary}
If $\mathfrak p\subseteq A_h$ is prime and
inverse-compatible, there exists
$b\in B_h\setminus\mathfrak p$ such that
\begin{equation}\label{eq:boundary}
 \CC_R(\mathfrak p)
 =\PP_R(\mathfrak p)\cap\CC_R(\mathfrak p,b).
\end{equation}
\end{lemma}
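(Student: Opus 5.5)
Apply Theorem~\ref{thm:flat-identity} to $E=R/[\mathfrak p]_R$ with its induced endomorphism $\sigma_E$. Choose $a\in E$ and $b=\sigma_E(a)$ so that the localized map $\lambda$ of \eqref{eq:lambda} is flat. Then pull back along $\pi:R\to E$ using Lemma~\ref{lem:functorial}.

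\textbf{Pulling back.} By \eqref{eq:quotient}, $\pi^{-1}(\CC_E(0))=\CC_R(\mathfrak p)$, $\pi^{-1}(\PP_E(0))=\PP_R(\mathfrak p)$ and $\pi^{-1}(\CC_E(\bar b))=\CC_R(\mathfrak p,b)$. Here we use $\CC_R([\mathfrak p]_R)=\CC_R(\mathfrak p)$. Hence \eqref{eq:flat-identity} for $E$ gives \eqref{eq:boundary} directly. It remains to pick $b\in B_h\setminus\mathfrak p$ of the form $\sigma(a')$ with $a'\in A_h^-$; this is possible because $\sigma:A_h^-\to B_h$ is an isomorphism. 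Set $a=\bar a'$, and verify flatness of $\lambda:E[a^{-1}]\to E[b^{-1}]$.

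\textbf{Flatness via Lemma~\ref{lem:tensor}.} By \eqref{eq:sigma-factor}, $\sigma_E=\iota\circ\bar\sigma$ with $\bar\sigma$ an isomorphism $E\to S$ sending $a\mapsto \bar b\in S$. Localizing, $\lambda$ factors as the isomorphism $E[a^{-1}]\cong S[b^{-1}]$ followed by $S[b^{-1}]\to E[b^{-1}]$. Since $b\in B_h$, the identification \eqref{eq:tensor} gives
\[
E[b^{-1}]\cong S[b^{-1}]\otimes_{B[b^{-1}]}D[b^{-1}],
\]
and the map is $s\mapsto s\otimes1$. This is the base change along $B[b^{-1}]\to S[b^{-1}]$ of the structure map $B[b^{-1}]\to D[b^{-1}]$. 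So it suffices to choose $b$ with $D[b^{-1}]$ flat (even free) over $B[b^{-1}]$.

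\textbf{Choosing $b$ (main obstacle).} Here $B=B_h/\mathfrak q$ is a domain, because $\mathfrak q=\mathfrak p\cap B_h$ is prime. Moreover, $D=A_h/\mathfrak p$ is a finitely generated $B$-algebra, since $A_h=B_h[y_{1,0},\dots,y_{n,0}]$. Also $B\to D$ is injective. Generic freeness (Grothendieck) gives a nonzero $\bar c\in B$ with $D[\bar c^{-1}]$ free over $B[\bar c^{-1}]$. Lift $\bar c$ to $b\in B_h$. Then $b\notin\mathfrak q$, so $b\notin\mathfrak p$. Base change preserves flatness, so $\lambda$ is flat. The points needing care are:
\begin{itemize}
\item that localization commutes with \eqref{eq:tensor}, which holds because $b$ comes from $B$;
\item that $\lambda$ as defined in \eqref{eq:lambda} really agrees with this composite, which follows from $\lambda(x/a^r)=\sigma_E(x)/b^r$.
\end{itemize}
If $D[b^{-1}]$ or $E$ degenerates (for example $E[b^{-1}]=0$), the Remark after Theorem~\ref{thm:flat-identity} covers it.
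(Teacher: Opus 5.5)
Your proposal is correct and follows the paper's proof essentially step for step. Generic freeness of $D$ over $B$ gives a nonzero element whose lift $b\in B_h$ makes $S[b_S^{-1}]\to E[b_E^{-1}]$ a base change of a free map; composing with the localized isomorphism $\bar\sigma$ at $a=\sigma^{-1}(b)$ yields the flat shift map of Lemma~\ref{lem:local-prime}, and Theorem~\ref{thm:flat-identity} is then pulled back along $\pi$ via \eqref{eq:quotient}. The only thing left implicit is that $B$ is Noetherian, being a quotient of the polynomial ring $B_h$, which the cited form of generic freeness requires.
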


\begin{proof}
Follow the notation of Lemma~\ref{lem:tensor}.
We first choose $b$ so that the map $\iota:S\to E$
becomes flat after localization, and then use the
factorization $\sigma_E=\iota\circ\bar{\sigma}$ to obtain
the flat localized shift map required by
Theorem~\ref{thm:flat-identity}.

Since $\mathfrak p$ is prime,  
$\mathfrak q=\mathfrak p\cap B_h$ is prime. Hence $B$
is a Noetherian domain. Moreover, 
$B\hookrightarrow  D$ is injective, and $D$ is a finitely generated
$B$-algebra, generated by the images of
$y_{1,0},\ldots,y_{n,0}$.
Applying generic freeness
\cite[Theorem~14.4]{Eisenbud1995} to the $B$-algebra
$D$ and the $D$-module $D$, we obtain a nonzero
$\beta\in B$ such that
$D[\beta^{-1}]$ is a free $B[\beta^{-1}]\text{-module}$.

Choose a lift $b\in B_h$ of $\beta$. Since
$\beta\neq 0$,  $b\notin\mathfrak q$, and
thus $b\notin\mathfrak p$. Denote the images of
$b$ in $S$ and $E$ by $b_S$ and $b_E$, respectively.
Localizing \eqref{eq:tensor} gives
\[
 E[b_E^{-1}]
 \cong
 S[b_S^{-1}]
 \otimes_{B[\beta^{-1}]}
 D[\beta^{-1}].
\]
Under this identification, the localized map
\(
 \iota_b:S[b_S^{-1}]
 \longrightarrow E[b_E^{-1}]
\)
is the canonical map into the tensor product.
Since extension of scalars preserves free modules,
$E[b_E^{-1}]$ is free, and hence flat, as an
$S[b_S^{-1}]$-module. Thus $\iota_b$ is flat.

Now set $a=\sigma^{-1}(b)\in A_h^-,$
and let $a_E$ denote its image in $E$. Since
$\sigma(a)=b$, we have
$\bar{\sigma}(a_E)=b_S,$ and $\sigma_E(a_E)=b_E.$
Localizing  $\bar{\sigma}$ gives an isomorphism
\[
 \bar{\sigma}_a:E[a_E^{-1}]
 \xrightarrow{\sim}S[b_S^{-1}].
\]
It follows that the composite
\[
 E[a_E^{-1}]
 \xrightarrow{\bar{\sigma}_a}
 S[b_S^{-1}]
 \xrightarrow{\iota_b}
 E[b_E^{-1}]
\]
is flat by sinding \(\frac{x}{a_E^r}
 \longmapsto
 \frac{\sigma_E(x)}{b_E^r}.\) 
It is therefore the localized shift map appearing
in Lemma~\ref{lem:local-prime}. Applying
Theorem~\ref{thm:flat-identity} to $E$ yields
\[
 \CC_E(0)=\PP_E(0)\cap\CC_E(b_E).
\]

Let $\pi:R\twoheadrightarrow E$ be the canonical difference
quotient map. By \eqref{eq:quotient},
\[
 \pi^{-1}\bigl(\CC_E(0)\bigr)
=\CC_R(\mathfrak p),
 \pi^{-1}\bigl(\PP_E(0)\bigr) =\PP_R(\mathfrak p),\,\,
 \pi^{-1}\bigl(\CC_E(b_E)\bigr)
 =\CC_R(\mathfrak p,b).
\]
Taking inverse images of the preceding identity
proves \eqref{eq:boundary}.
\end{proof}

\begin{theorem}[Uniform bound, inversive case]\label{thm:uniform-inv}
For every finite $F\subseteq R$ there is $N\ge0$ such that
\begin{equation}\label{eq:uniform-inv}
 \sigma^N\bigl(\PP_R(F)\bigr)\subseteq\CC_R(F),
 \qquad\text{equivalently }\quad
 \PP_R(F)=\sigma^{-N}\bigl(\CC_R(F)\bigr).
\end{equation}
\end{theorem}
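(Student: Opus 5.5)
The plan is Noetherian induction in the fixed truncated ring $A_h$. Choose $h\ge1$ so large that $F\subseteq A_h$; this is possible since $F$ is finite. For a radical ordinary ideal $J\subseteq A_h$, say that $J$ is \emph{good} if there is $N\ge0$ with $\sigma^N(\PP_R(J))\subseteq\CC_R(J)$. Every radical well-mixed ideal is radical, so $\CC_R(\sqrt{FA_h})=\CC_R(F)$, and likewise $\PP_R(\sqrt{FA_h})=\PP_R(F)$. It therefore suffices to show that every radical ideal of $A_h$ is good. Since $A_h$ is a Noetherian polynomial ring, if some radical ideal were not good there would be a maximal such $J$, and I will derive a contradiction. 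The ideal $J=A_h$ is trivially good.

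\emph{Reduction to primes.} Write $J=\mathfrak p_1\cap\cdots\cap\mathfrak p_k$ with the $\mathfrak p_i$ the minimal primes of $J$. Some power of $\mathfrak p_1\cdots\mathfrak p_k$ lies in $J$, so radical closures agree, and Lemma~\ref{lem:product-intersection} gives $\CC_R(J)=\bigcap_i\CC_R(\mathfrak p_i)$ and $\PP_R(J)=\bigcap_i\PP_R(\mathfrak p_i)$. If $k\ge2$, each $\mathfrak p_i\supsetneq J$ is good by maximality. Taking $N$ to be the maximum of the corresponding exponents (the chains $\sigma^{-m}$ are increasing) shows $J$ is good, a contradiction. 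Hence $J=\mathfrak p$ is prime.

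\emph{Case 1: $\mathfrak p$ is not inverse-compatible.} Then there is $f\in A_h^-\setminus\mathfrak p$ with $\sigma(f)\in\mathfrak p$. By Lemma~\ref{lem:reflexive-closure}, $f\in\PP_R(\mathfrak p)$, so $\PP_R(\mathfrak p)=\PP_R(\mathfrak p,f)$. The ideal $\sqrt{\mathfrak p+(f)}\supsetneq\mathfrak p$ is good, say with exponent $N$, so $\sigma^N(\PP_R(\mathfrak p))\subseteq\CC_R(\mathfrak p,f)$. I claim $\sigma(\CC_R(\mathfrak p,f))\subseteq\CC_R(\mathfrak p)$. Let $Q\supseteq\mathfrak p$ be a prime difference ideal. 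Then $\sigma^{-1}(Q)$ is a prime difference ideal. It contains $\mathfrak p$ because $\sigma(\mathfrak p)\subseteq[\mathfrak p]_R\subseteq Q$, and it contains $f$ because $\sigma(f)\in\mathfrak p\subseteq Q$. Hence $\sigma^{-1}(Q)\supseteq\CC_R(\mathfrak p,f)$. Lemma~\ref{lem:prime-representation} then gives the claim, and so exponent $N+1$ works for $\mathfrak p$, a contradiction.

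\emph{Case 2: $\mathfrak p$ is inverse-compatible.} Lemma~\ref{lem:boundary} gives $b\in B_h\setminus\mathfrak p$ with $\CC_R(\mathfrak p)=\PP_R(\mathfrak p)\cap\CC_R(\mathfrak p,b)$. Since $b\in A_h$, the ideal $\sqrt{\mathfrak p+(b)}\supsetneq\mathfrak p$ is good, with some exponent $N$. Using monotonicity and $\sigma(\PP_R(\mathfrak p))\subseteq\PP_R(\mathfrak p)$, this gives
\[
\sigma^N(\PP_R(\mathfrak p))\subseteq\PP_R(\mathfrak p)\cap\sigma^N(\PP_R(\mathfrak p,b))\subseteq\PP_R(\mathfrak p)\cap\CC_R(\mathfrak p,b)=\CC_R(\mathfrak p),
\]
again a contradiction.

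The equivalence of the two forms in \eqref{eq:uniform-inv} follows from Lemma~\ref{lem:reflexive-closure}. The substantive input, namely the flatness behind the boundary equation, is already packaged in Lemma~\ref{lem:boundary}. The step I expect to need the most care is ensuring that the induction stays inside the single ring $A_h$. Both enlargements, by $f\in A_h^-$ in Case 1 and by $b\in B_h$ in Case 2, must be elements of $A_h$ itself, so that maximality applies. I must also check in Case 1 that the non-inverse-compatible case genuinely costs only one extra transform.
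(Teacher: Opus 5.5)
Your proposal is correct and follows essentially the same route as the paper: Noetherian induction on ideals of the fixed ring $A_h$, reduction to a prime maximal counterexample, one extra transform in the non-inverse-compatible case, and the boundary equation of Lemma~\ref{lem:boundary} otherwise. The differences are cosmetic. You induct over radical ideals and split via minimal primes rather than using the paper's $\mathfrak p+(u)$, $\mathfrak p+(v)$ trick. You also verify $\sigma(\CC_R(\mathfrak p,f))\subseteq\CC_R(\mathfrak p)$ through prime representation, where the paper simply notes that $\sigma^{-1}(\CC_R(\mathfrak p))$ is radical well-mixed and contains $\mathfrak p+(f)$.
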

\begin{proof}
Choose $h\ge1$ with $F\subseteq A_h$. For an \emph{arbitrary ordinary ideal} $\mathfrak a\subseteq A_h$ consider the property
\[
 \mathscr U(\mathfrak a):\quad
 \text{there is $N\ge0$ such that }
 \sigma^N\bigl(\PP_R(\mathfrak a)\bigr)\subseteq\CC_R(\mathfrak a).
\]
We prove $\mathscr U(\mathfrak a)$ for every $\mathfrak a$ by Noetherian induction in this \emph{fixed} polynomial ring $A_h$. Suppose the assertion fails, and choose a maximal counterexample $\mathfrak p$ with respect to inclusion. The unit ideal satisfies $\mathscr U$ and so $\mathfrak p$ is proper.

First, $\mathfrak p$ is prime. Otherwise choose $u,v\notin\mathfrak p$ with $uv\in\mathfrak p$ and set $\mathfrak a=\mathfrak p+(u)$, $\mathfrak b=\mathfrak p+(v)$. Both strictly contain $\mathfrak p$. Since $\mathfrak p^2\subseteq\mathfrak a\mathfrak b\subseteq\mathfrak p$, radicality and Lemma~\ref{lem:product-intersection} imply, for $\mathcal L=\CC_R$ and for $\mathcal L=\PP_R$,
\[
 \mathcal L(\mathfrak p)=\mathcal L(\mathfrak a)\cap\mathcal L(\mathfrak b).
\]
Take uniform exponents for $\mathfrak a$ and $\mathfrak b$ and let $N$ be their maximum. Stability under further shifts gives
$\sigma^N(\PP_R(\mathfrak p)\subseteq\CC_R(\mathfrak a)\cap\CC_R(\mathfrak b)=\CC_R(\mathfrak p)$, a contradiction.

Second, $\mathfrak p$ is inverse-compatible. If $g\in\mathfrak p\cap B_h$ but $a=\sigma^{-1}(g)\notin\mathfrak p$, the ordinary ideal $\mathfrak p'=\mathfrak p+(a)$ strictly contains $\mathfrak p$. Since $\sigma(a)=g\in\mathfrak p$, reflexivity gives
\[
 \PP_R(\mathfrak p')=\PP_R(\mathfrak p).
\]
The ideal $\sigma^{-1}(\CC_R(\mathfrak p))$ is radical well-mixed and contains $\mathfrak p'$; hence
\[
 \CC_R(\mathfrak p')\subseteq\sigma^{-1}(\CC_R(\mathfrak p)).
\]
A uniform exponent $M$ for $\mathfrak p'$ therefore yields the exponent $M+1$ for $\mathfrak p$, again a contradiction.

Finally, Lemma~\ref{lem:boundary} supplies $b\in B_h\setminus\mathfrak p$ with
\[
 \CC_R(\mathfrak p)=\PP_R(\mathfrak p)\cap\CC_R(\mathfrak p,b).
\]
The ideal $\mathfrak p+(b)$ strictly contains $\mathfrak p$ \emph{inside the same ring $A_h$}. By maximality, for some $M\ge0$,
\[
 \sigma^M\PP_R(\mathfrak p,b)\subseteq\CC_R(\mathfrak p,b).
\]
For $x\in\PP_R(\mathfrak p)$, this puts $\sigma^M(x)$ in $\CC_R(\mathfrak p,b)$, while difference stability puts it in $\PP_R(\mathfrak p)$. Their intersection equals $\CC_R(\mathfrak p)$, the final contradiction.

Apply the result to the ordinary ideal $\mathfrak a=(F)_{A_h}$. The closures of $\mathfrak a$ in $R$ equal those of $F$.  This proves the inclusion in \eqref{eq:uniform-inv};
the reverse inclusion follows from
Lemma~\ref{lem:reflexive-closure}, giving the asserted equality.
\end{proof}

\begin{remark}\label{rem:fixed-truncation}
The only ascending chain condition invoked in the proof is ordinary Noetherianity of $A_h$. Both operations on a maximal counterexample adjoin polynomials already in $A_h$: an inverse-shifted overlap equation in $A_h^-$, or a boundary equation in $B_h$. This is what makes the exponent uniform for a fixed finite system.

In Section~6, we remove the inversiveness assumption on $K$ by descent from its inversive closure. We then use the
Ritt--Raudenbush basis theorem to extend the result to arbitrary subsets $F\subseteq R$.
\end{remark}

\section{Finite bases and finite-type extensions}\label{sec:transfer}

Write $\operatorname{HB}(A)$ for the assertion that every radical well-mixed difference ideal of $A$ has a finite radical well-mixed basis. Here, no injectivity hypothesis is imposed on the endomorphism of $A$.

\begin{lemma}[Maximal counterexample]\label{lem:maxcounter}
If $\operatorname{HB}(A)$ fails, there is a radical well-mixed difference ideal $I\subseteq A$ maximal among those without a finite radical well-mixed basis. Every such $I$ is an ordinary prime ideal.
\end{lemma}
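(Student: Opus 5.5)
The plan is Zorn's lemma followed by the standard primality argument, with the finitary Lemma~\ref{lem:finitarity} and Lemma~\ref{lem:product-intersection} doing the work.

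\emph{Existence.} Let $\Sigma$ be the set of radical well-mixed difference ideals of $A$ that have no finite radical well-mixed basis. By hypothesis $\Sigma\neq\varnothing$. Take a chain $(I_\lambda)$ in $\Sigma$ and put $I=\bigcup_\lambda I_\lambda$; by Lemma~\ref{lem:finitarity}, $I$ is radical well-mixed. Suppose $I=\CC_A(F)$ with $F$ finite. Then $F$ lies in some single $I_\lambda$, so
\[
 \CC_A(F)\subseteq I_\lambda\subseteq I=\CC_A(F).
\]
Hence $I_\lambda=\CC_A(F)$, which contradicts $I_\lambda\in\Sigma$. So $I\in\Sigma$, and Zorn's lemma gives a maximal element.

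\emph{Primality.} Let $I\in\Sigma$ be maximal. It is proper, since $A=\CC_A(\{1\})$. Suppose $I$ is not prime, and pick $u,v\notin I$ with $uv\in I$. Put $I_u=\CC_A(I\cup\{u\})$ and $I_v=\CC_A(I\cup\{v\})$. Both strictly contain $I$, so by maximality they have finite bases: $I_u=\CC_A(G)$ and $I_v=\CC_A(H)$ with $G,H$ finite.

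We want a finite basis for $I$. By Lemma~\ref{lem:product-intersection}, $I_u\cap I_v=\CC_A(GH)$, so it suffices to prove $I_u\cap I_v=I$. The same lemma gives
\[
 I_u\cap I_v=\CC_A\bigl((I\cup\{u\})(I\cup\{v\})\bigr).
\]
Every product of two elements from $I\cup\{u\}$ and $I\cup\{v\}$ lies in $I$: either a factor lies in $I$, or the product is $uv\in I$. Hence $I_u\cap I_v\subseteq I$. The reverse inclusion is clear.

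Therefore $I=\CC_A(GH)$ with $GH$ finite, contradicting $I\in\Sigma$. So $I$ is an ordinary prime ideal, and being a difference ideal, it is a prime difference ideal. I expect no real obstacle; the only point needing care is that Lemma~\ref{lem:product-intersection} applies to arbitrary subsets, not just ideals, and it does as stated.
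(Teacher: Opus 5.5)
Your proof is correct and follows the paper's argument essentially verbatim. Existence comes from Zorn's lemma, using that a finite basis of the union of a chain already lies in one member. Primality comes from writing $I=I_u\cap I_v=\CC_A(GH)$ via Lemma~\ref{lem:product-intersection}; the only cosmetic difference is that you apply that lemma to the subsets $I\cup\{u\}$ and $I\cup\{v\}$ rather than to the ideals $I+(u)$ and $I+(v)$, which is equally valid.
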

\begin{proof}
Chains of radical well-mixed ideals without finite bases have unions of the same kind. Indeed, if the union had a finite basis $F$, all elements of $F$ would lie in one member of the chain, forcing that member to equal the union. Zorn's lemma therefore gives a maximal counterexample $I$. Since the unit ideal is finitely based, $I\ne A$.

If $uv\in I$ with $u,v\notin I$, then $I_u=\CC_A(I,u)$ and $I_v=\CC_A(I,v)$ strictly contain $I$ and hence have finite bases, say $I_u=\CC_A(G)$ and $I_v=\CC_A(H)$ with $G,H$ finite. Every product of an element of $I+(u)$ and an element of $I+(v)$ belongs to $I$. Lemma~\ref{lem:product-intersection} shows $I_u\cap I_v=I$ and also
\[
 I=\CC_A(G)\cap\CC_A(H)
   =\CC_A(\{gh:g\in G,h\in H\}),
\]
a contradiction. Thus $I$ is prime.
\end{proof}

\begin{lemma}[Difference quotients]\label{lem:quotient-HB}
Suppose $J=\CC_A(F)$ for a finite set $F$, and $I\supseteq J$ is radical well-mixed. Then $I$ has a finite basis if and only if $I/J$ has a finite basis in $A/J$. A maximal counterexample $I$ remains maximal in $A/J$. Moreover, $\operatorname{HB}(A)$ implies
$\operatorname{HB}(A/L)$ for every difference ideal $L\subseteq A$, without any finite-basis assumption on $L$.
\end{lemma}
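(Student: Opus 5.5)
The whole argument rests on the quotient correspondence of Lemma~\ref{lem:functorial}. Let $\pi:A\to A/J$ be the canonical surjection; it is a difference homomorphism, and by \eqref{eq:quotient} the assignments $I\mapsto \pi(I)=I/J$ and $I'\mapsto \pi^{-1}(I')$ are mutually inverse, inclusion-preserving bijections between radical well-mixed ideals of $A$ containing $J$ and radical well-mixed ideals of $A/J$. Concretely, I will use that for every $U\subseteq A$,
\[
 \pi^{-1}\bigl(\CC_{A/J}(\pi(U))\bigr)=\CC_A(U\cup J).
\]

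\emph{First assertion.} Suppose $I=\CC_A(G)$ with $G$ finite. Applying $\pi$ to $\CC_A(G\cup J)=I$ and using surjectivity gives $I/J=\CC_{A/J}(\pi(G))$. Conversely, suppose $I/J=\CC_{A/J}(\overline G)$ for a finite set $\overline G$. Lift it to a finite $G\subseteq I$. Then
\[
 I=\pi^{-1}(I/J)=\CC_A(G\cup J)=\CC_A(G\cup F),
\]
where the last equality holds because $J=\CC_A(F)$ and any radical well-mixed ideal containing $F$ contains $J$. Since $G\cup F$ is finite, $I$ has a finite basis. This is the only place the finiteness of $F$ is used.

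\emph{Maximality.} Let $I$ be maximal among radical well-mixed ideals of $A$ without a finite basis, and let $I'\supsetneq I/J$ be radical well-mixed in $A/J$. Then $\pi^{-1}(I')\supsetneq I$ is radical well-mixed, so it has a finite basis by maximality of $I$. By the first assertion, $I'=\pi^{-1}(I')/J$ has a finite basis. Thus $I/J$ is a maximal counterexample in $A/J$.

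\emph{The final assertion.} This is the step that needs care, because $L$ is not assumed finitely based and the first assertion cannot be invoked. Let $I'$ be radical well-mixed in $A/L$, and put $I=\pi^{-1}(I')$. By Lemma~\ref{lem:functorial}, $I$ is radical well-mixed in $A$, and it contains $L$. By $\operatorname{HB}(A)$ we may write $I=\CC_A(G)$ with $G$ finite. Then
\[
 \pi^{-1}\bigl(\CC_{A/L}(\pi(G))\bigr)=\CC_A(G\cup L)=I,
\]
where the last equality uses $L\subseteq I$. Applying $\pi$ yields $I'=\CC_{A/L}(\pi(G))$, which is a finite basis. This works because the basis of the preimage automatically absorbs $L$. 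Hence $\operatorname{HB}(A/L)$ holds, and no injectivity of the induced endomorphism is needed anywhere.
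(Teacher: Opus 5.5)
Your proof is correct and follows essentially the same route as the paper. You lift a finite basis of $I/J$ and adjoin $F$ to get $I=\CC_A(G\cup F)$, transfer maximality through the ideal correspondence, and for $\operatorname{HB}(A/L)$ push down a finite basis of the preimage, which automatically absorbs $L$; the fact that $I/J$ itself has no finite basis, left implicit in your maximality paragraph, is immediate from your first assertion.
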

\begin{proof}
A finite basis in $A$ maps to one in $A/J$ by Lemma~\ref{lem:functorial}. Conversely, if $I/J$ is generated under radical well-mixed closure by images of a finite $G\subseteq I$, then
\[
 I=\CC_A(G\cup J)=\CC_A(G\cup F).
\]
This also proves preservation of maximality.
Let
$\rho:A\to A/L$ be  a general difference quotient,  and assume $\operatorname{HB}(A)$.
For a radical well-mixed difference ideal $H\subseteq A/L$, 
its inverse image $\rho^{-1}(H)$ is radical and well-mixed. Choose a finite set $G\subseteq A$ such that $\rho^{-1}(H)=\CC_A(G)$. Passing to the quotient gives
\(
 H=\CC_{A/L}\bigl(\rho(G)\bigr).
\)
Hence $\operatorname{HB}(A/L)$ holds. 
\end{proof}

We next record a generic finite-generation statement for ordinary ideals over a possibly non-Noetherian base.

\begin{lemma}[Generic finite generation]\label{lem:generic-fg}
Let $B$ be an integral domain, $A$ a finitely generated ordinary $B$-algebra, and $L\subseteq A$ an ordinary ideal. There are $0\ne e\in B$ and a finite set $G\subseteq L$ such that
\begin{equation}\label{eq:generic-fg}
 L A[e^{-1}]=(G)_{A[e^{-1}]}.
\end{equation}
Equivalently, for each $x\in L$ there is $m=m(x)\ge0$ with $e^m x\in(G)_A$.
\end{lemma}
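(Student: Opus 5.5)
The plan is to pass to the fraction field of $B$ and then clear denominators, and to record separately why the two formulations in the statement agree. Put $k=\Frac(B)$ and $A_k=A\otimes_B k$. This is a localization of $A$ at the multiplicative set $B\setminus\{0\}$, so $A_k$ is a finitely generated $k$-algebra. By the Hilbert basis theorem $A_k$ is Noetherian. Hence the extended ideal $LA_k$ is finitely generated, and we may choose its generators to be images of finitely many elements $g_1,\dots,g_r\in L$. Put $G=\{g_1,\dots,g_r\}$. The degenerate case $A_k=0$ causes no trouble: then some nonzero $e\in B$ maps to zero in $A$, and $A[e^{-1}]=0$, so the statement holds trivially.

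The key step is to find a single $e$ that works for all of $L$. Choose finitely many generators $a_1,\dots,a_s$ of $A$ as a $B$-algebra. It is not enough to control the images of these generators. We need that each element $x\in L$ lies in $(G)$ after inverting a fixed $e$, and a priori each $x$ needs its own denominator $d_x\in B\setminus\{0\}$ with $d_x x\in(G)_A$. To obtain uniformity we use Noetherianity once more. Choose a presentation $A=B[t_1,\dots,t_s]/\mathfrak c$. Then $L$ corresponds to an ideal $\tilde L\supseteq\mathfrak c$ of $B[t]$, and it suffices to prove the claim for $\tilde L$ in the polynomial ring $B[t]$, taking images afterwards. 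So we reduce to $A=B[t]$ a polynomial ring, which is free over $B$.

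In the polynomial case we use Gröbner bases over $k$. Fix a monomial order and take a reduced Gröbner basis of $\tilde L k[t]$. Each basis element can be scaled to have coefficients in $B$ and to lie in $\tilde L$. Call this finite set $G\subseteq\tilde L$, and let $e$ be the product of the leading coefficients of its elements. Over $B[e^{-1}]$ the leading coefficients are units, so the division algorithm by $G$ runs inside $B[e^{-1}][t]$. Now let $x\in\tilde L$. Its remainder on division by $G$ lies in $\tilde L k[t]$ and is reduced with respect to a Gröbner basis, so it is zero. Therefore $x\in(G)_{B[e^{-1}][t]}$. This yields \eqref{eq:generic-fg} in $B[t]$, and taking images of $G$ and $e$ in $A$ gives the statement for $A$.

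The equivalence of the two formulations is routine. An element of $(G)A[e^{-1}]$ can be written with a common denominator $e^m$, and an identity in a localization means an identity in $A$ after multiplying by a further power of $e$. I expect the main obstacle to be the uniformity step, that is, replacing the elementwise denominators $d_x$ by one $e$. The Gröbner argument above handles this, since it makes membership testing work over $B[e^{-1}]$.
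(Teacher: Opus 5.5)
Your proof is correct and is essentially the paper's argument: you pass to a presentation $B[t]\twoheadrightarrow A$, take a monic Gr\"obner basis of the extended ideal over $\Frac(B)$, clear denominators to land in $\tilde L$, invert the product $e$ of the resulting leading coefficients, and conclude by division with zero remainder. Your opening Noetherianity paragraph is not used by this argument and can be dropped. The one step worth making explicit is that a basis element $g$ is scaled into $\tilde L$ itself, not merely into $B[t]$, by clearing denominators in an expression $g=\sum c_j\ell_j$ with $c_j\in \Frac(B)[t]$ and $\ell_j\in\tilde L$.
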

\begin{proof}
Choose a surjective $B$-algebra map $\rho:B[X_1,\ldots,X_t]\twoheadrightarrow A$, and put $M=\rho^{-1}(L)$. Let $Q=\Frac(B)$ and fix a monomial order. The ideal $MQ[X]$ has a finite monic Gr\"obner basis $g_1,\ldots,g_s$ over $Q$. Each $g_i$ is a finite $Q[X]$-linear combination of elements of $M$. Clearing denominators gives $d_i\in B\setminus\{0\}$ such that $h_i=d_i g_i\in M$. Set $e=\prod_i d_i$, taking $e=1$ if $MQ[X]$ is zero.

Over $B[e^{-1}]$, all $g_i$ are monic and lie in the ideal generated by the $h_i$. For any $f\in MB[e^{-1}][X]$, monic division yields $f=\sum q_i g_i+r$, where no term of $r$ is divisible by a leading monomial of the Gr\"obner basis. After embedding in $Q[X]$, the remainder lies in $MQ[X]$, so $r=0$. Consequently
\[
 MB[e^{-1}][X]=(h_1,\ldots,h_s)_{B[e^{-1}][X]}.
\]
Take images under $\rho$ to obtain \eqref{eq:generic-fg}. Clearing denominators gives the stated saturation formulation; it also works if $A\to A[e^{-1}]$ is not injective.
\end{proof}

\begin{lemma}[Annihilators]\label{lem:ann}
Let $(A,\sigma)$ be a reduced well-mixed difference ring. For every $x\in A$ the ordinary ideal $\Ann_A(x)$ is radical well-mixed. In particular,
\begin{equation}\label{eq:ann-criterion}
 ex=0\quad\text{and}\quad x\in\CC_A(e)
 \quad\Longrightarrow\quad x=0.
\end{equation}
\end{lemma}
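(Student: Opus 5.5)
I would verify the defining properties of a radical well-mixed difference ideal for $\Ann_A(x)$ one at a time. All three use only that $A$ is reduced and well-mixed, where well-mixed means $uv=0\Rightarrow u\sigma(v)=0$, as recorded in the proof of Lemma~\ref{lem:prime-representation}. After that, \eqref{eq:ann-criterion} will follow from minimality of $\CC_A(e)$.

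First, $\Ann_A(x)$ is an ordinary ideal, and it is stable under $\sigma$. If $ax=0$, well-mixedness applied to the product $x\cdot a=0$ gives $x\sigma(a)=0$, so $\sigma(a)\in\Ann_A(x)$.

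Next, radicality. If $a^r x=0$ with $r\ge1$, then $(ax)^r=a^r x\cdot x^{r-1}=0$. Since $A$ is reduced, $ax=0$.

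Next, well-mixedness of the ideal. Suppose $abx=0$; we want $a\sigma(b)x=0$. Write the hypothesis as $(ax)\cdot b=0$. Well-mixedness of the ring gives $ax\sigma(b)=0$, which is exactly $a\sigma(b)\in\Ann_A(x)$. So the ring-level well-mixed property transfers directly to the annihilator. I do not expect any real obstacle here; the only care needed is to group the factors so that the element being shifted is $b$ and not $x$.

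For \eqref{eq:ann-criterion}, suppose $ex=0$. Then $e\in\Ann_A(x)$. Since $\Ann_A(x)$ is radical well-mixed, minimality gives $\CC_A(e)\subseteq\Ann_A(x)$. If in addition $x\in\CC_A(e)$, then $x\cdot x=0$, and reducedness yields $x=0$.
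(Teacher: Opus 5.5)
Your proof is correct and matches the paper's argument step for step. Stability and well-mixedness of $\Ann_A(x)$ come from applying ring-level well-mixedness to $x\cdot a=0$ and $(ax)\cdot b=0$, and radicality from $(ax)^r=a^rx\cdot x^{r-1}=0$ together with reducedness. The criterion then follows because minimality gives $\CC_A(e)\subseteq\Ann_A(x)$, hence $x^2=0$.
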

\begin{proof}
If $rx=0$, well-mixedness gives $x\sigma(r)=0$, so the annihilator is a difference ideal. If $rsx=0$, well-mixedness applied to $(xr)s=0$ gives $xr\sigma(s)=0$. If $r^m x=0$, then $(rx)^m=r^m x^m=0$, and reducedness yields $rx=0$. Hence the annihilator is radical well-mixed. If $ex=0$, it contains $e$ and therefore $\CC_A(e)$. Under the further hypothesis $x\in\CC_A(e)$, we have $x^2=0$, and reducedness gives $x=0$.
\end{proof}

\begin{theorem}[Finite-type transfer]\label{thm:finite-type}
Let $f:(B,\sigma_B)\to(A,\sigma_A)$ be a difference-ring homomorphism such that $A$ is finitely generated as an ordinary $B$-algebra via $f$. If $\operatorname{HB}(B)$ holds, then $\operatorname{HB}(A)$ holds.
\end{theorem}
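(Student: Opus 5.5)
The plan is to argue by contradiction via a maximal counterexample. Then we reduce to the case where the base is a domain mapping injectively modulo the counterexample. Finally we combine generic finite generation (Lemma~\ref{lem:generic-fg}) with the annihilator criterion (Lemma~\ref{lem:ann}).

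Suppose $\operatorname{HB}(A)$ fails. By Lemma~\ref{lem:maxcounter} there is a maximal counterexample $I\subseteq A$, and $I$ is an ordinary prime, hence a prime difference ideal.

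\emph{Reduction to a domain base.} Put $P=f^{-1}(I)$. This is a prime difference ideal of $B$, so it is radical well-mixed by Lemma~\ref{lem:functorial}. Since $\operatorname{HB}(B)$ holds, $P=\CC_B(F_0)$ for a finite $F_0$. Set $J=\CC_A(f(F_0))$. By Lemma~\ref{lem:functorial}, $f(P)=f(\CC_B(F_0))\subseteq J\subseteq I$. Hence $f$ induces a difference homomorphism $B'=B/P\to A'=A/J$, and $A'$ is still finitely generated as an ordinary $B'$-algebra. Lemma~\ref{lem:quotient-HB} gives three facts:
\begin{itemize}
\item $\operatorname{HB}(B')$ holds;
\item the image $I'$ of $I$ is a maximal counterexample in $A'$;
\item $I'$ has a finite basis if and only if $I$ does.
\end{itemize}
We may therefore assume that $B$ is a domain, that $f^{-1}(I)=0$, and that $I$ is a prime maximal counterexample.

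\emph{Generic generators.} Apply Lemma~\ref{lem:generic-fg} to the ordinary ideal $I$ of the $B$-algebra $A$. This gives $0\neq e\in B$ and a finite $G\subseteq I$ such that every $x\in I$ satisfies $f(e)^m x\in(G)_A$ for some $m$. Write $e$ for $f(e)$. Since $f^{-1}(I)=0$, we have $e\notin I$. Thus $\CC_A(I,e)\supsetneq I$, so by maximality $\CC_A(I,e)$ is finitely based. By Lemma~\ref{lem:finitarity} we may choose the basis inside $I\cup\{e\}$, so $\CC_A(I,e)=\CC_A(G',e)$ with $G'\subseteq I$ finite. The claim is that $I=\CC_A(G\cup G')$, which contradicts the choice of $I$.

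\emph{The annihilator step.} Let $\bar A=A/\CC_A(G\cup G')$. This ring is reduced and well-mixed, and every step above passes to it by Lemma~\ref{lem:functorial}. Take $x\in I$ and let $\bar x,\bar e$ be the images of $x,e$ in $\bar A$.
\begin{itemize}
\item From $e^m x\in(G)$ we get $\bar e^m\bar x=0$. By Lemma~\ref{lem:ann}, $\Ann_{\bar A}(\bar x)$ is radical, so $\bar e\bar x=0$.
\item From $x\in\CC_A(G',e)$ we get $\bar x\in\CC_{\bar A}(\bar G',\bar e)=\CC_{\bar A}(\bar e)$, because the images of $G'$ vanish in $\bar A$.
\end{itemize}
The implication \eqref{eq:ann-criterion} now gives $\bar x=0$, that is, $x\in\CC_A(G\cup G')$. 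The reverse inclusion $\CC_A(G\cup G')\subseteq I$ holds because $G\cup G'\subseteq I$ and $I$ is radical well-mixed. Hence $I=\CC_A(G\cup G')$ is finitely based, the desired contradiction.

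I expect the main obstacle to be the reduction step. One must check that passing to $B/P$ and $A/\CC_A(f(F_0))$ keeps $I$ a maximal counterexample and yields the injectivity $f^{-1}(I)=0$, because that injectivity is exactly what guarantees $e\notin I$. Lemma~\ref{lem:quotient-HB} appears to supply all the needed bookkeeping. The rest of the argument is the annihilator trick, which uses only reducedness and well-mixedness in the quotient and needs no injectivity of $\sigma_A$.
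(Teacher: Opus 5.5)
Your proposal is correct and follows the paper's proof essentially step for step. The steps are: a maximal prime counterexample; reduction modulo a finite basis of the contraction to a domain base meeting $I$ trivially; generic finite generation producing $e$ and $G$; extraction of a finite $G'\subseteq I$ with $\CC_A(I,e)=\CC_A(G',e)$; and the annihilator criterion in $A/\CC_A(G\cup G')$. The only differences are cosmetic: you work with $f^{-1}(I)$ directly rather than first treating $B\subseteq A$ and then passing to $f(B)\cong B/\ker f$, and you get $\bar e\bar x=0$ from radicality of $\Ann_{\bar A}(\bar x)$ rather than from the paper's inclusion $eI\subseteq\CC_A(G)$.
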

\begin{proof}
First suppose that $B\subseteq A$. If $\operatorname{HB}(A)$ fails, take a maximal counterexample $I$ as in Lemma~\ref{lem:maxcounter}; in particular $I$ is prime. Its contraction $\mathfrak q=I\cap B$ is a prime radical well-mixed ideal. Since $\operatorname{HB}(B)$ holds, choose a finite $F\subseteq\mathfrak q$ such that $\mathfrak q=\CC_B(F)$. Put $J=\CC_A(F)$. Then $J\subseteq I$ and
\begin{equation}\label{eq:contract-J}
 J\cap B=\mathfrak q.
\end{equation}
Indeed, the first inclusion follows from $J\subseteq I$, and the reverse follows because $J\cap B$ is radical well-mixed and contains $F$.

Replace $B,A,I$ by $B/\mathfrak q,A/J,I/J$. Equation~\eqref{eq:contract-J} ensures that the induced map of the first two quotient rings is injective. Lemma~\ref{lem:quotient-HB} preserves both the base finite-basis property and maximality of the counterexample. We may therefore assume
\begin{equation}\label{eq:domain-reduction}
 B\text{ is a domain},\qquad I\cap B=0.
\end{equation}

Apply Lemma~\ref{lem:generic-fg} to the ordinary ideal $I\subseteq A$. We obtain $0\ne e\in B$ and a finite $G\subseteq I$ such that for each $x\in I$ some $m\ge1$ satisfies $e^m x\in(G)_A$. Then
\[
 (ex)^m=x^{m-1}(e^m x)\in(G)_A\subseteq\CC_A(G),
\]
and radicality gives
\begin{equation}\label{eq:eI}
 eI\subseteq\CC_A(G).
\end{equation}
By \eqref{eq:domain-reduction}, $e\notin I$. The radical well-mixed ideal $\CC_A(I,e)$ strictly contains $I$ and therefore has a finite basis by maximality. Finitarity (Lemma~\ref{lem:finitarity}) supplies a finite $H\subseteq I$ such that
\begin{equation}\label{eq:extract-H}
 \CC_A(I,e)=\CC_A(H,e).
\end{equation}
To see this, express every member of a finite basis of the left-hand side as a consequence of a finite subset of $I\cup\{e\}$ and collect the elements coming from $I$.

Put $J_1=\CC_A(G\cup H)$ and pass to $\overline A=A/J_1$, which is reduced and well-mixed. For $x\in I$, write $\bar e,\bar x$ for the images in $\overline A$. Equation~\eqref{eq:eI} gives $\bar e\bar x=0$; equation~\eqref{eq:extract-H}, after passing to the quotient, gives $\bar x\in\CC_{\overline A}(\bar e)$. Lemma~\ref{lem:ann} yields $\bar x=0$. Thus $I\subseteq J_1$, and the reverse inclusion follows from $G\cup H\subseteq I$. We have expressed $I=\CC_A(G\cup H)$, a contradiction.
Thus, we proved $\operatorname{HB}(B)$ for $B\subseteq A$. 

For a general $f$, the isomorphism of difference rings $ f(B)\cong B/\ker(f)$
and Lemma~\ref{lem:quotient-HB} imply that
$\operatorname{HB}(f(B))$ holds.
Since $A$ is finitely generated as an ordinary
$f(B)$-algebra, applying the inclusion case to
$f(B)\subseteq A$ gives $\operatorname{HB}(A)$.
\end{proof}

\begin{remark}
The only ordinary finite-generation input is Lemma~\ref{lem:generic-fg}; the base $B$ itself need not be Noetherian. The proof does not assume that an arbitrary difference quotient of $A$ is Noetherian as an ordinary ring.
\end{remark}

\section{The basis theorem over inversive fields}\label{sec:inversive-main}

We continue to assume that $(K,\sigma_K)$ is inversive and set $R=K\{y_1,\ldots,y_n\}$.

\begin{theorem}[Inversive-field case]\label{thm:inversive-basis}
Every radical well-mixed difference ideal in $R$ has a finite radical well-mixed basis.
\end{theorem}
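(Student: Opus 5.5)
We argue by maximal counterexample, combining the Ritt--Raudenbush theorem (Theorem~\ref{thm:Ritt--Raudenbush}) with the uniform bound of Theorem~\ref{thm:uniform-inv}. Suppose $\operatorname{HB}(R)$ fails. By Lemma~\ref{lem:maxcounter}, choose a radical well-mixed ideal $I$ that is maximal among those without a finite basis; then $I$ is prime. The perfect closure $P=\PP_R(I)$ has a finite perfect basis $E\subseteq P$ by Theorem~\ref{thm:Ritt--Raudenbush}. By \eqref{eq:reflexive}, each $e\in E$ satisfies $\sigma^{m_e}(e)\in I$ for some $m_e$. Replacing $E$ by the finite set $E'=\{\sigma^{m_e}(e)\}\subseteq I$ gives $\PP_R(E')=P$, since perfect ideals are reflexive.

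The crux is to pin $I$ between closures of a finite system. Put $F=E'$, a finite subset of $I$. Theorem~\ref{thm:uniform-inv} gives $N$ with $\sigma^N(P)\subseteq\CC_R(F)\subseteq I\subseteq P$. Thus $I$ is squeezed between $\CC_R(F)$ and $\PP_R(F)=\sigma^{-N}(\CC_R(F))$. We then pass to the quotient $A=R/\CC_R(F)$. By Lemma~\ref{lem:quotient-HB}, $I/\CC_R(F)$ is still a maximal counterexample in $A$, and in $A$ the image $\bar P$ of $P$ satisfies $\sigma^N(\bar P)=0$.

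The finite-type step is to find a finitely generated ordinary subalgebra to which Theorem~\ref{thm:finite-type} applies. Since $F\subseteq A_h$, I expect $A$ to be generated as an ordinary algebra over the image of $\sigma^{N'}(A)$ by the images of $y_{i,j}$ for $j<N'$, for any $N'$. This suggests taking the difference subring $B'=\sigma^{N}(A)$, which contains the image of $K$ because $K$ is inversive. Theorem~\ref{thm:finite-type} then reduces $\operatorname{HB}(A)$ to $\operatorname{HB}(B')$. Now $B'$ is a difference quotient of $R$ via $\sigma^N$, namely $R\to\sigma^N(A)$, with kernel $\sigma^{-N}(\CC_R(F))=P$, a perfect ideal. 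So $B'\cong R/P$ as difference rings up to a twist of the $K$-structure by $\sigma_K^N$; this twist is harmless because $K$ is inversive.

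It remains to prove $\operatorname{HB}(R/P)$ for perfect $P$. Here the natural tactic is to choose the counterexample more cleverly, and this is the main obstacle. Rather than working with a single $I$, I would run the argument with $I$ maximal and the perfect closure fixed. Every radical well-mixed ideal $J$ of $R/P$ pulls back to an ideal containing $P$, which is perfect. Lemma~\ref{lem:reflexive-closure} then forces $\PP(J)$ to be perfect with $J\supseteq P$, and we apply the squeeze again with $P$ replaced by $\PP_R(J)\supsetneq P$ whenever $J$ is not perfect. Noetherian induction on perfect ideals, which is valid by Ritt--Raudenbush, handles the strictly larger perfect closures. The remaining case, where all the relevant ideals are perfect, is handled directly by Theorem~\ref{thm:Ritt--Raudenbush}, since a perfect ideal with a finite perfect basis $E$ equals $\CC(\sigma^{N}E')$ by the argument in the second paragraph. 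Organizing this induction cleanly, by induction on $\PP_R(I)$ over the Noetherian poset of perfect ideals, is the step I expect to require the most care.
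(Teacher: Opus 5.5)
Your setup through the appeal to Theorem~\ref{thm:finite-type} matches the paper's proof: a maximal prime counterexample $I$, a Ritt--Raudenbush basis shifted into a finite $F\subseteq I$, the identity $P=\sigma^{-N}(\CC_R(F))$, the subring $D=\tau^N(R/\CC_R(F))\cong R/P$, and finite type over $D$ by inversiveness.

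\textbf{The gap is the final step, $\operatorname{HB}(R/P)$.} You split ideals into non-perfect and perfect ones. This leaves a ``remaining case'' that you dispose of with a false claim. A perfect ideal $P=\PP_R(E)$ is not in general equal to $\CC_R(\sigma^N E')$; your second paragraph only yields the squeeze $\sigma^N(P)\subseteq\CC_R(F)\subseteq P$.

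Concretely, take $E=\{x\sigma(y)\}$ in $K\{x,y\}$. The prime difference ideal $(y_1,y_2,\ldots)$ contains every $\sigma^k(x\sigma(y))=x_ky_{k+1}$, but it does not contain $xy$, which lies in $\PP_R(E)$ by Example~\ref{ex:different-closures}. Hence no $\CC_R(\sigma^k(x\sigma(y)))$ equals $\PP_R(E)$. That a perfect ideal has a finite radical well-mixed basis is itself a nontrivial instance of the theorem. It does not follow from Ritt--Raudenbush plus the uniform bound.

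\textbf{The missing observation is that $\operatorname{HB}(R/P)$ follows at once from the maximality of $I$.} No induction on perfect ideals is needed. Radical well-mixed ideals of $R/P$ correspond to radical well-mixed ideals $\widetilde L\supseteq P$ of $R$. There are two cases.
\begin{enumerate}
\item If $\widetilde L=P$, the corresponding ideal is $0=\CC_{R/P}(\varnothing)$, because $P$ is radical well-mixed. You never need a finite basis of $P$ in $R$.
\item If $\widetilde L\supsetneq P$, then $\widetilde L\supsetneq I$ because $P\supseteq I$. So $\widetilde L$ has a finite basis by maximality, and its image is a finite basis of $\widetilde L/P$.
\end{enumerate}
This works whether or not $I$ is perfect. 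In particular, the perfect case $I=P$ goes through the same squeeze-plus-transfer argument, with Theorem~\ref{thm:finite-type} doing the real work.

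If you prefer your induction on $\PP_R(I)$, the correct split is $\widetilde L=P$ versus $\widetilde L\supsetneq P$, not perfect versus non-perfect. Every $\widetilde L\supsetneq P$, perfect or not, has perfect closure strictly larger than $P$, so the induction hypothesis covers it.

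With $\operatorname{HB}(R/P)$ in hand, $\operatorname{HB}(D)$ holds. Theorem~\ref{thm:finite-type} then gives $\operatorname{HB}(R/\CC_R(F))$, and Lemma~\ref{lem:quotient-HB} lifts a finite basis of $I/\CC_R(F)$ to one of $I$, which is the desired contradiction.
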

\begin{proof}
Suppose otherwise, and choose a maximal counterexample $I$ by Lemma~\ref{lem:maxcounter}. Put $P=\PP_R(I)$. By Lemma~\ref{lem:reflexive-closure},
\begin{equation}\label{eq:P-union}
 P=\bigcup_{r\ge0}\sigma^{-r}(I).
\end{equation}
By Theorem~\ref{thm:Ritt--Raudenbush}, choose finite $E\subseteq P$ with $P=\PP_R(E)$. Each $e\in E$ satisfies $\sigma^{r(e)}(e)\in I$ for some $r(e)\ge0$ by \eqref{eq:P-union}. Choosing $r\ge r(e)$ for all $e$ and putting $F=\sigma^r(E)$, we have $F\subseteq I$ and
\begin{equation}\label{eq:PF}
 \PP_R(F)=P.
\end{equation}
The equality holds because $P$ contains $F$, while the perfect ideal $\PP_R(F)$ is reflexive and therefore contains each $e\in E$. The case $E=\varnothing$ is covered by taking $r=0$.

Let $J=\CC_R(F)\subseteq I$. Theorem~\ref{thm:uniform-inv} and \eqref{eq:PF} furnish $N\ge0$ such that
\begin{equation}\label{eq:key-PJ}
 P=\sigma^{-N}(J).
\end{equation}
Put $A=R/J$, write $\tau$ for its induced endomorphism, and let $D=\tau^N(A)\subseteq A$. Since $\tau(D)\subseteq D$, this is a difference subring. Formula~\eqref{eq:key-PJ} says that the kernel of $\tau^N:A\to A$ is $P/J$. The first isomorphism theorem therefore gives a difference-ring isomorphism
\begin{equation}\label{eq:D-isomorphism}
 \tau^N: \,R/P\xrightarrow{\sim}D,\qquad
 f+P\longmapsto\sigma^N(f)+J.
\end{equation}

We claim that $\operatorname{HB}(R/P)$ holds. The zero ideal of $R/P$ is radical well-mixed because $P$ is perfect, and its empty basis is finite. If $L$ is a nonzero radical well-mixed ideal of $R/P$, its inverse image in $R$ strictly contains $P\supseteq I$. By maximality of $I$, the inverse image has a finite radical well-mixed basis; projecting that basis to $R/P$ gives one for $L$. Thus $\operatorname{HB}(R/P)$ holds, and \eqref{eq:D-isomorphism} implies $\operatorname{HB}(D)$.

It remains to show that $A$ is a finite-type ordinary $D$-algebra. Let $\bar y_{i,j}$ be the class of $y_{i,j}$ in $A$. If $j\ge N$, then
\[
 \bar y_{i,j}=\tau^N(\bar y_{i,j-N})\in D.
\]
Moreover, since $\sigma_K$ is an automorphism, every $c\in K$ has $c'\in K$ with $\sigma_K^N(c')=c$; hence $c+J=\tau^N(c'+J)\in D$. Consequently
\begin{equation}\label{eq:finite-type-over-D}
 A=D[\bar y_{i,j}:1\le i\le n,\ 0\le j<N].
\end{equation}
The set of displayed generators is finite. Theorem~\ref{thm:finite-type} yields $\operatorname{HB}(A)$, so $I/J$ has a finite basis. Lemma~\ref{lem:quotient-HB}, together with $J=\CC_R(F)$ for finite $F$, lifts it to a finite basis of $I$, contrary to choice.
\end{proof}

\begin{corollary}\label{cor:inversive-algebras}
Every finitely difference-generated difference algebra over an inversive difference field has the finite radical well-mixed basis property and the ascending chain condition for radical well-mixed ideals.
\end{corollary}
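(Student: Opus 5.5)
The plan is to realize $A$ as a difference quotient of a difference polynomial ring over $K$ and then combine Theorem~\ref{thm:inversive-basis} with Lemma~\ref{lem:quotient-HB}. Let $K$ be inversive, and let $A$ be a difference $K$-algebra generated as a difference algebra by finitely many elements $a_1,\ldots,a_n$. By definition, $A$ is generated as an ordinary $K$-algebra by the transforms $\sigma_A^j(a_i)$.

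\textbf{Step 1: a surjection onto $A$.} Set $R=K\{y_1,\ldots,y_n\}$. Define $\rho:R\to A$ as the $K$-algebra map with $\rho(y_{i,j})=\sigma_A^j(a_i)$.
\begin{itemize}
\item It is a difference homomorphism: $\rho\circ\sigma$ and $\sigma_A\circ\rho$ agree on $K$, because the structure map $K\to A$ is a difference morphism. They also agree on every $y_{i,j}$. Both are ring homomorphisms, so they coincide.
\item It is surjective, since its image contains $K$ and all generators $\sigma_A^j(a_i)$.
\end{itemize}
Hence $A\cong R/L$ as difference rings, where $L=\ker\rho$ is a difference ideal. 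We do not need $L$ to be finitely based, and we do not need $\sigma_A$ to be injective.

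\textbf{Step 2: the basis property.} Theorem~\ref{thm:inversive-basis} gives $\operatorname{HB}(R)$. The final assertion of Lemma~\ref{lem:quotient-HB} then gives $\operatorname{HB}(R/L)$, that is, $\operatorname{HB}(A)$. That assertion applies to an arbitrary difference ideal $L$.

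\textbf{Step 3: the ascending chain condition.} Suppose $J_1\subseteq J_2\subseteq\cdots$ is a chain of radical well-mixed ideals of $A$.
\begin{itemize}
\item By Lemma~\ref{lem:finitarity}, the union $J=\bigcup_k J_k$ is radical well-mixed.
\item By Step 2, $J=\CC_A(F)$ for some finite $F\subseteq J$.
\item The finite set $F$ lies in some $J_k$, so $J\subseteq J_k$. Hence the chain stabilizes at $k$.
\end{itemize}
Conversely, if the chain condition holds, any radical well-mixed ideal $J$ has a finite basis. Otherwise, adjoining elements of $J$ one at a time would build a strictly increasing chain $\CC_A(x_1)\subsetneq\CC_A(x_1,x_2)\subsetneq\cdots$ inside $J$.

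The only point needing care is Step 1, namely that $\rho$ commutes with the endomorphisms. This reduces to checking the identity on ring generators. Everything else follows directly from the cited results.
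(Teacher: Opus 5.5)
Your proposal is correct and follows essentially the same route as the paper: present $A$ as a difference quotient of $K\{y_1,\ldots,y_n\}$, combine Theorem~\ref{thm:inversive-basis} with the quotient clause of Lemma~\ref{lem:quotient-HB}, and deduce the chain condition from a finite basis of the union. Your explicit check that the evaluation map commutes with the endomorphisms, and the added converse (chain condition implies finite bases), are details the paper leaves implicit here; both are fine.
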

\begin{proof}
Such an algebra is a difference quotient of a difference polynomial ring in finitely many indeterminates. Apply Theorem~\ref{thm:inversive-basis} and quotient preservation in Lemma~\ref{lem:quotient-HB}. If $I_0\subseteq I_1\subseteq\cdots$ is an ascending chain, its union is radical well-mixed. A finite basis of the union lies inside some $I_N$, forcing stabilization.
\end{proof}

\section{Descent from the inversive closure}\label{sec:descent}

We now remove the coefficient-field hypothesis from the preceding two theorems. The argument is not ordinary base change of ideals: extending a radical well-mixed ideal to a larger difference field need not leave it radical or well-mixed. What is needed, and what we prove, is exact \emph{contraction after taking radical well-mixed closure}.

Let $(K,\sigma_K)$ be an arbitrary difference field and let $K^{\inv}$ denote its inversive closure. Concretely, inside $K^{\inv}$, put
\[
 K_m=\sigma^{-m}(K)\qquad(m\ge0),
\]
where $\sigma$ on $K^{\inv}$ is an automorphism. Then by \cite{Chatzidakis},
\begin{equation}\label{eq:K-tower}
 K=K_0\subseteq K_1\subseteq\cdots,\qquad
 K^{\inv}=\bigcup_{m\ge0}K_m,\qquad
 \sigma(K_m)=K_{m-1}\quad(m\ge1).
\end{equation}
In particular, each $K_m/K_{m-1}$ is a (possibly infinitely generated) field extension, and hence flat. 

Let $A$ be any difference $K$-algebra. Set
\[
 A_m=K_m\otimes_K A,\qquad
 A^{\inv}=K^{\inv}\otimes_K A=\varinjlim_m A_m.
\]
Since
\(
 A_m\cong K_m\otimes_{K_{m-1}}A_{m-1},
\)
each transition map $A_{m-1}\to A_m$ is a base change
of the faithfully flat field extension
$K_{m-1}\subseteq K_m$. Hence these maps are faithfully
flat and, in particular, injective. We therefore regard
the $A_m$ as subrings of $A^{\inv}$.
Each $A_m$ is a difference ring via
\[
 \sigma_m(c\otimes a)=\sigma(c)\otimes\sigma_A(a).
\]
For $m\ge1$, the map $\sigma_m$ actually has image in $A_{m-1}$, because $\sigma(K_m)=K_{m-1}$ and $\sigma_A(A)\subseteq A$. Its restriction to $A_{m-1}$ agrees with that ring's difference endomorphism.

\begin{proposition}[Prime difference lying-over]\label{prop:prime-descent}
For every prime difference ideal $Q_0$ of $A$, there is a prime difference ideal $Q$ of $A^{\inv}$ with
\[
 Q\cap A=Q_0.
\]
More precisely, $Q$ may be chosen as the union of prime difference ideals $Q_m\subseteq A_m$ satisfying
\begin{equation}\label{eq:compatible-primes}
 Q_m\cap A_{m-1}=Q_{m-1},\qquad
 \sigma(Q_m)\subseteq Q_{m-1}\quad(m\ge1).
\end{equation}
\end{proposition}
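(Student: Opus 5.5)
The plan is to build the primes $Q_m\subseteq A_m$ by induction on $m$, starting from $Q_0$ in $A_0=A$. Each step is a single application of flat going-down along the faithfully flat inclusion $A_{m-1}\subseteq A_m$. The point is to descend below the prime pulled back by the shift $\sigma_m:A_m\to A_{m-1}$, much as in the proof of Lemma~\ref{lem:local-prime}. At the end we take $Q=\bigcup_m Q_m$.

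For the inductive step, suppose $Q_{m-1}$ is a prime difference ideal of $A_{m-1}$ for some $m\ge1$. Consider the ordinary prime ideal
\[
 \mathfrak P=\sigma_m^{-1}(Q_{m-1})\subseteq A_m ,
\]
which is a genuine inverse image under the ring homomorphism $\sigma_m:A_m\to A_{m-1}$. We compute its contraction to $A_{m-1}$, using that $\sigma_m$ restricted to $A_{m-1}$ is $\sigma_{m-1}$:
\[
 \mathfrak P\cap A_{m-1}=\sigma_{m-1}^{-1}(Q_{m-1})\supseteq Q_{m-1}.
\]
The inclusion holds because $Q_{m-1}$ is $\sigma$-stable. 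So $Q_{m-1}\subseteq \mathfrak P\cap A_{m-1}$ is a chain of primes in $A_{m-1}$, and $\mathfrak P$ lies over the larger one.

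Since $A_{m-1}\to A_m$ is flat, going-down yields a prime $Q_m\subseteq\mathfrak P$ of $A_m$ with $Q_m\cap A_{m-1}=Q_{m-1}$. Then $Q_m\subseteq\mathfrak P$ means exactly $\sigma_m(Q_m)\subseteq Q_{m-1}$, which gives \eqref{eq:compatible-primes}. Moreover $\sigma_m(Q_m)\subseteq Q_{m-1}\subseteq Q_m$, so $Q_m$ is a prime difference ideal of $A_m$.

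Finally set $Q=\bigcup_{m\ge0}Q_m\subseteq A^{\inv}=\bigcup_m A_m$. The compatibility $Q_m\cap A_{m-1}=Q_{m-1}$ gives $Q\cap A_k=Q_k$ for every $k$, and in particular $Q\cap A=Q_0$. We then check that $Q$ is a prime difference ideal.
\begin{enumerate}
\item $Q$ is an ideal: any two elements of $Q$, together with any multiplier in $A^{\inv}$, lie in a common $A_k$, where the relevant membership reduces to $Q\cap A_k=Q_k$.
\item $Q$ is proper, since $1\notin Q_0$.
\item $Q$ is prime: if $xy\in Q$ with $x,y\in A_k$, then $xy\in Q_k$, which is prime.
\item $Q$ is $\sigma$-stable: each $Q_m$ is stable under $\sigma_m$, and $\sigma$ on $A^{\inv}$ restricts to $\sigma_m$ on $A_m$.
\end{enumerate}

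The only substantive step is choosing the target prime $\mathfrak P=\sigma_m^{-1}(Q_{m-1})$ correctly, so that one application of going-down gives both the lying-over condition and the shift condition simultaneously. This is where the difference operator, rather than plain lying-over, selects the extension. A naive lying-over prime of $Q_{m-1}$ in $A_m$ need not satisfy $\sigma(Q_m)\subseteq Q_m$. Flatness is available for every $m$ because $A_m$ is a base change of the field extension $K_m/K_{m-1}$, which need not be finitely generated.
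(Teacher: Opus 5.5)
Your proposal is correct and is essentially the paper's proof. You pull $Q_{m-1}$ back along $\sigma_m$, use stability to see that the contraction of $\sigma_m^{-1}(Q_{m-1})$ to $A_{m-1}$ contains $Q_{m-1}$, and apply flat going-down along $A_{m-1}\subseteq A_m$ to get $Q_m\subseteq\sigma_m^{-1}(Q_{m-1})$ lying over $Q_{m-1}$; your explicit check that $Q\cap A_k=Q_k$ for all $k$ just spells out the paper's terser verification that the union is a prime difference ideal contracting to $Q_0$.
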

\begin{proof}
Suppose $Q_{m-1}$ has been constructed. Consider the inverse image
\[
 \mathfrak q'=\sigma_m^{-1}(Q_{m-1})\subseteq A_m.
\]
Because $Q_{m-1}$ is an ordinary prime ideal and $\sigma_m:A_m\to A_{m-1}$ is a ring homomorphism, $\mathfrak q'$ is prime. Let
\[
 \mathfrak p'=\mathfrak q'\cap A_{m-1}.
\]
The stability of $Q_{m-1}$ implies $Q_{m-1}\subseteq\mathfrak p'$: for $x\in Q_{m-1}$, $\sigma_m(x)=\sigma_{m-1}(x)\in Q_{m-1}$.

Since the inclusion $A_{m-1}\hookrightarrow A_m$ is flat, apply ordinary flat going-down \cite[Lemma 10.11]{Eisenbud1995} to the chain
\[
 Q_{m-1}\subseteq\mathfrak p'
\]
and to the prime $\mathfrak q'$ above $\mathfrak p'$. We obtain a prime ideal $Q_m\subseteq\mathfrak q'$ of $A_m$ contracting exactly to $Q_{m-1}$. As $Q_m\subseteq\sigma_m^{-1}(Q_{m-1})$, we have
\[
 \sigma(Q_m)\subseteq Q_{m-1}\subseteq Q_m.
\]
Thus $Q_m$ is a prime difference ideal, and both requirements in \eqref{eq:compatible-primes} hold.

Inductively construct the $Q_m$ for all $m$. Their union $Q=\bigcup_{m\ge0}Q_m$ inside $A^{\inv}$ is an ordinary prime ideal: if $xy\in Q$, the two factors and their product lie in some common $A_m$, where primality applies. It is a difference ideal by \eqref{eq:compatible-primes} and by the stability of $Q_0$. Its contraction to $A$ is $Q_0$ because every transition map contracts $Q_m$ to $Q_{m-1}$. This proves the proposition.
\end{proof}

\begin{theorem}[Exact contraction of radical well-mixed closure]\label{thm:contraction}
Let $A$ be any difference $K$-algebra and $A^{\inv}=K^{\inv}\otimes_K A$, with the induced difference structure. For every subset $U\subseteq A$,
\begin{equation}\label{eq:closure-contract}
 \CC_{A^{\inv}}(U)\cap A=\CC_A(U).
\end{equation}
\end{theorem}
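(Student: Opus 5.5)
We prove the two inclusions in \eqref{eq:closure-contract} separately. The inclusion ``$\supseteq$'' is formal. The structure map $j:A\to A^{\inv}$, $a\mapsto 1\otimes a$, is a difference-ring homomorphism. It is injective, since it is a base change of the field extension $K\subseteq K^{\inv}$ and hence faithfully flat. By Lemma~\ref{lem:functorial}, $j^{-1}(\CC_{A^{\inv}}(U))=\CC_{A^{\inv}}(U)\cap A$ is a radical well-mixed difference ideal of $A$ containing $U$. By minimality it therefore contains $\CC_A(U)$.

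For ``$\subseteq$'' we use the prime representation of Lemma~\ref{lem:prime-representation}. It gives
\[
 \CC_A(U)=\bigcap_{\substack{Q_0\supseteq U\\ Q_0\text{ prime difference ideal of }A}}Q_0 .
\]
So it suffices to fix $x\in\CC_{A^{\inv}}(U)\cap A$ and a prime difference ideal $Q_0\supseteq U$ of $A$, and to show $x\in Q_0$. Proposition~\ref{prop:prime-descent} supplies a prime difference ideal $Q$ of $A^{\inv}$ with $Q\cap A=Q_0$. Since $Q\supseteq Q_0\supseteq U$ and $Q$ is radical and well-mixed, we get $\CC_{A^{\inv}}(U)\subseteq Q$. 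Hence $x\in Q\cap A=Q_0$. If there are no such primes $Q_0$, then $\CC_A(U)=A$ and the inclusion is trivial.

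The real content sits entirely in Proposition~\ref{prop:prime-descent}, which we may assume. The only remaining points to check are these:
\begin{enumerate}
\item The difference structure on $A^{\inv}$ is the one for which Proposition~\ref{prop:prime-descent} produces a difference ideal. Concretely, it restricts to $\sigma_m$ on each $A_m$, and $A$ is identified with $A_0$.
\item Prime difference ideals of $A^{\inv}$ are radical well-mixed. This is immediate from primality together with $\sigma(Q)\subseteq Q$, as noted in the proof of Lemma~\ref{lem:prime-representation}.
\end{enumerate}
The step I would expect to be the main obstacle, if it were not already available, is exactly this lifting of primes compatibly with $\sigma$. Ordinary lying-over would not guarantee stability under $\sigma$, which is why the going-down selection $Q_m\subseteq\sigma_m^{-1}(Q_{m-1})$ is essential.
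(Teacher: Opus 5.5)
Your proposal is correct and follows the paper's proof essentially line for line. For $\supseteq$, the inverse image of $\CC_{A^{\inv}}(U)$ in $A$ is radical well-mixed and contains $U$. For $\subseteq$, each prime difference ideal $Q_0\supseteq U$ of $A$ lifts to $A^{\inv}$ by Proposition~\ref{prop:prime-descent}, and intersecting over these primes with Lemma~\ref{lem:prime-representation} gives the result, including the case where there are no such primes.
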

\begin{proof}
The inverse image of $\CC_{A^{\inv}}(U)$ under $A\hookrightarrow A^{\inv}$ is radical well-mixed and contains $U$. Thus it contains $\CC_A(U)$. Conversely, let $Q_0$ be any prime difference ideal of $A$ containing $U$. Proposition~\ref{prop:prime-descent} lifts $Q_0$ to a prime difference ideal $Q$ of $A^{\inv}$, and $Q$ contains $U$. By Lemma \ref{lem:prime-representation},
\[
 \CC_{A^{\inv}}(U)\subseteq Q,
 \qquad\text{hence}\qquad
 \CC_{A^{\inv}}(U)\cap A\subseteq Q_0.
\]
Intersect over all such $Q_0$ and apply Lemma~\ref{lem:prime-representation}. If there are no such primes, both closures are the whole ring by the same lemma. This proves \eqref{eq:closure-contract}.
\end{proof}

\begin{example}[Ordinary extension need not preserve well-mixedness]\label{ex:noninversive-extension}
Let $k$ be a field of characteristic different from $2$, put $K=k(t)$, and let $\sigma_K$ fix $k$ and send $t$ to $t^2$. Define
\[
 I=(y_0^2-t,\ y_1-t,\ y_2-t^2,\ y_3-t^4,\ldots)
 \subseteq K\{y\}.
\]
This is a prime difference ideal: its quotient is the field $K(a)$, with $a^2=t$, on which $\sigma(a)=t$. Now pass to $K^{\inv}=\bigcup_{m\ge0}k(t^{1/2^m})$, and let $a=t^{1/2}\in K^{\inv}$. The ordinary extension of $I$ has quotient
\[
 K^{\inv}\{y\}/IK^{\inv}\{y\}
 \cong K^{\inv}[y_0]/(y_0^2-t)
 \cong K^{\inv}\times K^{\inv}.
\]
In this quotient the nonzero orthogonal idempotents
$e_\pm=\tfrac12(1\pm y_0/a)$ satisfy
$\sigma(e_+)=1$ and $\sigma(e_-)=0$. Since
$e_-e_+=0$ but $e_-\sigma(e_+)=e_-\ne0$, the extended ordinary ideal is not well-mixed. Thus one must take radical well-mixed closure \emph{before} contraction in Theorem~\ref{thm:contraction}.
\end{example}

\begin{corollary}[Finite-basis descent]\label{cor:HB-descent}
If $\operatorname{HB}(A^{\inv})$ holds, then $\operatorname{HB}(A)$ holds.
\end{corollary}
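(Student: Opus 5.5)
The plan is to deduce $\operatorname{HB}(A)$ directly from Theorem~\ref{thm:contraction} applied to a radical well-mixed ideal and its extension, using finitarity to pull a finite basis back down to $A$.

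Fix a radical well-mixed difference ideal $I\subseteq A$, and form its closure upstairs, $I^{\inv}=\CC_{A^{\inv}}(I)$, where $I$ is viewed inside $A^{\inv}$ via the injective map $A\hookrightarrow A^{\inv}$. By Theorem~\ref{thm:contraction} with $U=I$,
\[
 I^{\inv}\cap A=\CC_A(I)=I.
\]
By $\operatorname{HB}(A^{\inv})$, the ideal $I^{\inv}$ has a finite radical well-mixed basis $G\subseteq I^{\inv}$, possibly with coefficients outside $A$. To obtain a basis inside $I$, apply Lemma~\ref{lem:finitarity}: each $g\in G$ lies in $\CC_{A^{\inv}}(F_g)$ for some finite $F_g\subseteq I$. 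Put $F=\bigcup_{g\in G}F_g$. This is a finite subset of $I$, and
\[
 I^{\inv}=\CC_{A^{\inv}}(G)\subseteq\CC_{A^{\inv}}(F)\subseteq\CC_{A^{\inv}}(I)=I^{\inv},
\]
so $\CC_{A^{\inv}}(F)=I^{\inv}$.

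A second application of Theorem~\ref{thm:contraction}, now with $U=F$, gives
\[
 \CC_A(F)=\CC_{A^{\inv}}(F)\cap A=I^{\inv}\cap A=I.
\]
Hence $F$ is a finite radical well-mixed basis of $I$, which proves $\operatorname{HB}(A)$.

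There is no real obstacle here, since all the substance sits in Theorem~\ref{thm:contraction}. The one point to handle carefully is that the finite basis upstairs must be replaced by a finite subset of $I$ itself; the finitarity step does exactly this. It relies on $I^{\inv}$ being the closure of $I$ rather than some arbitrary ideal, and on the contraction identity holding for arbitrary subsets $U$, including the finite set $F$.
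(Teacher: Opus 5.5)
Your proposal is correct and follows the paper's own proof essentially step for step. You take the closure $\CC_{A^{\inv}}(I)$, replace its finite basis by a finite $F\subseteq I$ via finitarity, and apply Theorem~\ref{thm:contraction} twice; your explicit sandwich argument for $\CC_{A^{\inv}}(F)=\CC_{A^{\inv}}(I)$ spells out a step the paper leaves implicit.
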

\begin{proof}
Let $I$ be a radical well-mixed difference ideal of $A$, and put $\widetilde I=\CC_{A^{\inv}}(I)$. By assumption, choose a finite $G\subseteq\widetilde I$ such that $\widetilde I=\CC_{A^{\inv}}(G)$. Finitarity gives, for every $g\in G$, a finite $F_g\subseteq I$ with $g\in\CC_{A^{\inv}}(F_g)$. Put $F=\bigcup_{g\in G}F_g$. Then
\[
 \widetilde I=\CC_{A^{\inv}}(F).
\]
Contract to $A$ and apply Theorem~\ref{thm:contraction} twice to obtain
\[
 I=\CC_{A^{\inv}}(I)\cap A
   =\CC_{A^{\inv}}(F)\cap A=\CC_A(F).
\]
Thus $I$ has the required finite basis.
\end{proof}

\begin{corollary}[Uniform-bound descent]\label{cor:uniform-descent}
Let $F\subseteq A$ be finite. Suppose an integer $N$ satisfies
\[
 \sigma^N\PP_{A^{\inv}}(F)\subseteq\CC_{A^{\inv}}(F).
\]
Then the same integer satisfies
\[
 \sigma^N\PP_A(F)\subseteq\CC_A(F).
\]
\end{corollary}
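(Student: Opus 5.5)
Let $x\in\PP_A(F)$; we must show $\sigma^N(x)\in\CC_A(F)$. The plan is to push $x$ into $A^{\inv}$, apply the hypothesis there, and pull the result back to $A$ using the exact contraction of Theorem~\ref{thm:contraction}. Throughout, $A$ is regarded as a difference subring of $A^{\inv}$ via $a\mapsto 1\otimes a$. This map is injective, since $A\to A_m$ is a base change of a field extension, and it commutes with the endomorphisms, because $\sigma(1\otimes a)=1\otimes\sigma_A(a)$. Consequently the image of $\sigma^N(x)$ in $A^{\inv}$ is $\sigma^N$ applied to the image of $x$, and it again lies in $A$.

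First, apply Lemma~\ref{lem:functorial} to the difference homomorphism $A\hookrightarrow A^{\inv}$. This gives $\PP_A(F)\subseteq\PP_{A^{\inv}}(F)$, so $x\in\PP_{A^{\inv}}(F)$. If one wants to avoid relying on the perfect-closure part of that lemma, Lemma~\ref{lem:reflexive-closure} gives the same conclusion directly. Indeed, $x\in\PP_A(F)$ means $\sigma^m(x)\in\CC_A(F)$ for some $m$. The inclusion $\CC_A(F)\subseteq\CC_{A^{\inv}}(F)$ holds by functoriality. Hence $\sigma^m(x)\in\CC_{A^{\inv}}(F)$, and so $x\in\PP_{A^{\inv}}(F)$.

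Second, the hypothesis gives $\sigma^N(x)\in\CC_{A^{\inv}}(F)$. Since $\sigma^N(x)$ lies in $A$, we get $\sigma^N(x)\in\CC_{A^{\inv}}(F)\cap A$. Theorem~\ref{thm:contraction} with $U=F$ identifies this intersection with $\CC_A(F)$, which proves the claim. Finiteness of $F$ is not actually used; it only matches the setting in which the corollary will be applied.

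I expect no real obstacle, since the nontrivial input is entirely Theorem~\ref{thm:contraction}. The one point needing care is that $\sigma$ on $A^{\inv}$ restricts to $\sigma_A$ on the copy of $A$. This is immediate from the formula $\sigma_m(c\otimes a)=\sigma(c)\otimes\sigma_A(a)$ with $c=1$.
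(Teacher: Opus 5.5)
Your proof is correct and follows the paper's argument: you push $x\in\PP_A(F)$ into $\PP_{A^{\inv}}(F)$ via Lemma~\ref{lem:functorial}, apply the hypothesis, and contract back to $A$ using Theorem~\ref{thm:contraction}. Your additional remarks are also accurate: the injectivity and $\sigma$-compatibility of $A\hookrightarrow A^{\inv}$, the alternative route through Lemma~\ref{lem:reflexive-closure}, and the observation that finiteness of $F$ is never used.
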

\begin{proof}
The natural map $A\to A^{\inv}$ is a difference-ring homomorphism. By Lemma~\ref{lem:functorial}, every $x\in\PP_A(F)$ belongs to $\PP_{A^{\inv}}(F)$. Hence $\sigma^N(x)\in\CC_{A^{\inv}}(F)\cap A=\CC_A(F)$ by Theorem~\ref{thm:contraction}.
\end{proof}

\begin{proof}[Proof of Theorems~\ref{thm:main} and~\ref{thm:uniform-intro}]
For $R=K\{\mathbf y\}$, coefficient extension gives
\[
 R^{\inv}=K^{\inv}\otimes_K R\cong K^{\inv}\{\mathbf y\}.
\]
The coefficient field $K^{\inv}$ is inversive. Therefore Theorem~\ref{thm:inversive-basis} gives $\operatorname{HB}(R^{\inv})$, and Corollary~\ref{cor:HB-descent} gives $\operatorname{HB}(R)$. This is the finite-basis statement of Theorem~\ref{thm:main}. For a finite $F\subseteq R$, Theorem~\ref{thm:uniform-inv} supplies $N$ with the uniform bound in $R^{\inv}$, and Corollary~\ref{cor:uniform-descent} transfers it to $R$. Reflexivity of $\PP_R(F)$ then gives the equality in \eqref{eq:uniform-main}, proving Theorem~\ref{thm:uniform-intro}.

Finally, if $I_0\subseteq I_1\subseteq\cdots$ is an ascending chain of radical well-mixed ideals of $R$, its union $I$ is radical well-mixed by Lemma~\ref{lem:finitarity}. Choose finite $F\subseteq I$ with $I=\CC_R(F)$. Some $I_N$ contains $F$; then $I=I_N$ and the chain stabilizes.
\end{proof}

The Ritt--Raudenbush basis theorem allows us to remove
the finiteness assumption in Theorem~\ref{thm:uniform-intro}.

\begin{corollary}\label{cor:uniform-arbitrary}
Let $(K,\sigma_K)$ be an arbitrary difference field and
let $R=K\{y_1,\ldots,y_n\}$.
For every subset $F\subseteq R$, there exists an integer
$N=N(F)\geq0$ such that
\[
 \PP_R(F)=\sigma^{-N}\!\bigl(\CC_R(F)\bigr).
\]
Equivalently, $\sigma^N(\PP_R(F))\subseteq\CC_R(F)$.
\end{corollary}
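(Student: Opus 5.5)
We reduce to the finite case, Theorem~\ref{thm:uniform-intro}, using finitarity and the two basis theorems. Let $F\subseteq R$ be arbitrary. By Theorem~\ref{thm:main}, the radical well-mixed ideal $\CC_R(F)$ has a finite radical well-mixed basis $G\subseteq\CC_R(F)$. By Lemma~\ref{lem:finitarity}, each $g\in G$ lies in $\CC_R(F_g)$ for some finite $F_g\subseteq F$. Put $F'=\bigcup_{g\in G}F_g$, a finite subset of $F$. Then
\[
 \CC_R(F)=\CC_R(G)\subseteq\CC_R(F')\subseteq\CC_R(F),
\]
so $\CC_R(F')=\CC_R(F)$.

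Next we compare perfect closures. By Lemma~\ref{lem:reflexive-closure}, $\PP_R(U)=\bigcup_m\sigma^{-m}(\CC_R(U))$ for every $U$. Hence $\PP_R(F)$ depends only on $\CC_R(F)$, and therefore $\PP_R(F')=\PP_R(F)$. Now apply Theorem~\ref{thm:uniform-intro} to the finite set $F'$: there is $N=N(F')$ with
\[
 \PP_R(F)=\PP_R(F')=\sigma^{-N}\bigl(\CC_R(F')\bigr)=\sigma^{-N}\bigl(\CC_R(F)\bigr).
\]
This is the claimed identity with $N(F):=N(F')$.

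The equivalence with $\sigma^N(\PP_R(F))\subseteq\CC_R(F)$ is formal. The forward direction is immediate from the identity. Conversely, the inclusion gives $\PP_R(F)\subseteq\sigma^{-N}(\CC_R(F))$, and the reverse inclusion $\sigma^{-N}(\CC_R(F))\subseteq\PP_R(F)$ always holds by Lemma~\ref{lem:reflexive-closure}.

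There is no real obstacle once the main theorems are available; the only point needing care is that the reduction must pass through equality of the radical well-mixed closures $\CC_R(F)=\CC_R(F')$, not merely of the perfect closures. For this reason Theorem~\ref{thm:main} is the essential input here. Ritt--Raudenbush alone would yield a finite $F'\subseteq F$ with $\PP_R(F')=\PP_R(F)$, but it gives no control of $\CC_R(F)$ in terms of $\CC_R(F')$. If one wished to use only Ritt--Raudenbush, one would take $E$ finite with $\PP_R(E)=\PP_R(F)$, shift $E$ into $\CC_R(F)$, and then face exactly this gap, which the basis theorem closes.
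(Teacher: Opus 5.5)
Your proof is correct, but it uses a stronger input than the paper does. You invoke the radical well-mixed basis theorem (Theorem~\ref{thm:main}) to get a finite $F'\subseteq F$ with $\CC_R(F')=\CC_R(F)$, and hence $\PP_R(F')=\PP_R(F)$. This is not circular, since Theorem~\ref{thm:main} is proved earlier from the finite-system bound and Ritt--Raudenbush.

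The paper uses only Theorem~\ref{thm:Ritt--Raudenbush} and Lemma~\ref{lem:finitarity}. These give a finite $F_0\subseteq F$ with $\PP_R(F_0)=\PP_R(F)$, and then
\[
 \sigma^N\bigl(\PP_R(F)\bigr)=\sigma^N\bigl(\PP_R(F_0)\bigr)\subseteq\CC_R(F_0)\subseteq\CC_R(F).
\]
So your closing diagnosis is mistaken. No equality of radical well-mixed closures is needed, only the inclusion $\CC_R(F_0)\subseteq\CC_R(F)$. That inclusion is automatic from $F_0\subseteq F$ and points in exactly the required direction.

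Your shifting variant has no gap either. Suppose $E\subseteq\PP_R(F)$ is finite with $\PP_R(E)=\PP_R(F)$ and $\sigma^r(E)\subseteq\CC_R(F)$. Reflexivity gives $\PP_R(\sigma^r(E))=\PP_R(F)$, and minimality gives $\CC_R(\sigma^r(E))\subseteq\CC_R(F)$, so the same chain of inclusions goes through.

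The paper's route shows that the corollary depends only on the finite uniform bound plus the classical perfect basis theorem, independently of Theorem~\ref{thm:main}. Your route adds the extra, unneeded fact that the finite subset can be chosen with the same radical well-mixed closure as $F$.
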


\begin{proof}
By Theorem \ref{thm:Ritt--Raudenbush}, the perfect difference ideal
$\PP_R(F)$ has a finite perfect basis.
Lemma~\ref{lem:finitarity} therefore yields a finite subset
$F_0\subseteq F$ such that
\[
 \PP_R(F_0)=\PP_R(F).
\]
Applying Theorem~\ref{thm:uniform-intro} to $F_0$, we obtain
an integer $N\geq0$ for which
\[
 \sigma^N\bigl(\PP_R(F)\bigr)
 =\sigma^N\bigl(\PP_R(F_0)\bigr)
 \subseteq\CC_R(F_0)
 \subseteq\CC_R(F).
\]
Thus $\PP_R(F)\subseteq\sigma^{-N}(\CC_R(F))$.
The reverse inclusion follows because
$\CC_R(F)\subseteq\PP_R(F)$ and $\PP_R(F)$ is reflexive.
\end{proof}

\begin{proof}[Proof of Corollary~\ref{cor:intro}]
Choose a finite set of difference generators $a_1,\ldots,a_n$ of $A$, that is, $A=K\{a_1,\ldots,a_n\}$. The evaluation homomorphism
\[
 \pi:K\{y_1,\ldots,y_n\}\twoheadrightarrow A,
 \qquad y_i\longmapsto a_i,
\]
is a surjective difference $K$-algebra homomorphism. For a radical well-mixed ideal $J\subseteq A$, its inverse image $I=\pi^{-1}(J)$ is radical well-mixed in $R$ by Lemma~\ref{lem:functorial}. Theorem~\ref{thm:main} gives finite $F\subseteq I$ with $I=\CC_R(F)$; passing to the quotient gives $J=\CC_A(\pi(F))$. For an ascending chain of ideals of $A$, use finitarity and the same finite-basis argument as above. Conversely, if the chain condition holds but a radical well-mixed ideal has no finite basis, adjoining one new element outside each previously finitely based closure yields an infinite strictly increasing chain. This establishes the equivalence.
\end{proof}

\begin{remark} \label{rem:two-going-down}
The flat-lifting argument of Section~\ref{sec:lifting} constructs a smaller prime difference ideal while controlling an inverse transform in a fixed truncated ring. Proposition~\ref{prop:prime-descent} uses flat going-down in a different way: it constructs compatible prime difference ideals over an increasing sequence of coefficient fields. The latter argument is independent of the uniform-bound proof and removes inversiveness without requiring an ordinary finite-type extension of coefficient fields.
\end{remark}

\section{Examples and final remarks}\label{sec:examples}

\begin{example} \label{ex:order}
The exponent cannot be chosen independently of the equations.
Indeed,
let $R=K\{y\}$, write $y_j=\sigma^j(y)$ and fix $r\ge1$. One has
\[
 \CC_R(y_r)=(y_r,y_{r+1},\ldots),\qquad
 \PP_R(y_r)=(y_0,y_1,\ldots).
\]
The first ideal is prime and difference-stable. The second is perfect, and reflexivity forces $y_0$ into every perfect ideal containing $y_r$. Thus
\[
 \PP_R(y_r)=\sigma^{-r}\bigl(\CC_R(y_r)\bigr).
\]
No exponent $N<r$ is sufficient: $y_0\in\PP_R(y_r)$, whereas $\sigma^N(y_0)=y_N\notin\CC_R(y_r)$.
\end{example}

The following example shows that perfect closure can also be strictly larger than the radical well-mixed closure. 
\begin{example} \label{ex:different-closures}
In $R=K\{x,y\}$ let $F=\{x\sigma(y)\}$. Well-mixedness, using the two factors in the reverse order, gives $\sigma(y)\sigma(x)=\sigma(xy)\in\CC_R(F)$. Reflexivity yields $xy\in\PP_R(F)$. On the other hand,
\[
 Q=(y_1,y_2,\ldots)\subseteq K[x_j,y_j:j\ge0]
\]
is a prime difference ideal containing $F$ and not containing $x_0y_0$. By Lemma~\ref{lem:prime-representation}, $xy\notin\CC_R(F)$.
\end{example}

\begin{remark} 
The ideal $(y_0,y_1,\ldots)$ of Example~\ref{ex:order} has the one-element radical well-mixed basis $\{y_0\}$ but is not finitely generated as an ordinary ideal. Thus the finite-basis theorem does not assert ordinary Noetherianity of a difference polynomial ring, nor ordinary finite generation of its radical well-mixed ideals.
\end{remark}

\begin{remark}
The results hold over an arbitrary difference field with a
single distinguished endomorphism. No assumptions of
characteristic zero, algebraic closedness, perfection,
or inversiveness are imposed. The induced endomorphism
on a finitely generated difference algebra need not be
injective. The present proof does not provide an effective
uniform exponent or address the case of several commuting
difference operators. Extending the prime-lifting and
fixed-truncation arguments to that setting, as well as
obtaining effective bounds, remains a direction for
future work.
\end{remark}

\section*{Acknowledgements}
The authors acknowledge the use of GPT-6 Astra to assist with brainstorming, mathematical development, and manuscript drafting.
The authors are solely responsible for the final content, analysis, and conclusions.

\section*{Funding}
WL was partially supported by the Strategic Priority Research Program of the Chinese Academy of Sciences under Grant XDA0480503, National Key R\&D Program of China 2023YFA1009401, NSFC Grants (12671568, 12288201). JW was supported by the National Key R\&D Program of China (No.~2023YFA1009401) and the National Natural Science Foundation of China (No.~12171324).

\end{document}